\theoremstyle{plain}
\newtheorem{X}{X}[section]
\newtheorem{Lem}[X]{Lemma}
\newtheorem{Thm}[X]{Theorem}
\newtheorem{Prop}[X]{Proposition}
\numberwithin{equation}{section}
\newcommand{\br}[1]{\ensuremath{\left(#1\right)}} 
\newcommand{\Br}[1]{\ensuremath{\left\{#1\right\}}}   
\newcommand{\ab}[1]{\ensuremath{\vert#1\vert}} 
\newcommand{\abs}[1]{\ensuremath{\left\lvert#1\right\rvert}}
\newcommand{\sums}[2]{\ensuremath{\sum_{\substack{#1 \\ #2}}}}
\newcommand{\sumss}[3]{\ensuremath{\sum_{\substack{#1 \\ #2 \\ #3}}}}
\newcommand{\prods}[2]{\ensuremath{\prod_{\substack{#1 \\ #2}}}}
\newcommand{\Sum}{\sideset{}{^{\prime}}{\sum}}
\newcommand{\Sums}[2]{\sideset{}{^{\prime}}{\sum}_{\substack{#1 \\ #2}}}
\newcommand{\SumS}{\sideset{}{^{*}}{\sum}}
\newcommand{\h}{\ensuremath{\mathcal{H}}} 
\newcommand{\hd}{\ensuremath{\h^{+}}}
\renewcommand{\le}{\leqslant}
\renewcommand{\ge}{\geqslant}
\renewcommand{\O}[2]{\ensuremath{\Omega(#1;#2)}} 
\renewcommand{\v}{\ensuremath{\abs{\Omega(p)}}} 
\newcommand{\vd}[1]{\ensuremath{\abs{\Omega(#1)}}} 
\newcommand{\vs}[1]{\ensuremath{\abs{\Omega^{*}(#1)}}} 
\newcommand{\od}[1]{\ensuremath{\Omega(#1)}}
\newcommand{\os}[1]{\ensuremath{\Omega^{*}(#1)}}
\newcommand{\op}[1]{\ensuremath{\Omega^{+}(#1)}}
\title{Strings of congruent primes in short intervals}
\author{Tristan Freiberg}
\begin{document}

\begin{abstract}
Fix $\epsilon > 0$, and let $p_1 = 2, p_2 = 3,\ldots$ be the sequence of all primes. We prove that if $(q,a) = 1$ then there are infinitely many pairs $p_r,p_{r+1}$ such that $p_r \equiv p_{r+1} \equiv a \bmod q$ and $p_{r+1} - p_r < \epsilon \log p_r$. The proof combines the ideas of Shiu \cite{S2000} and Goldston-Pintz-Y{\i}ld{\i}r{\i}m \cite{GPY2009}.
\end{abstract}

\maketitle

\section{Introduction}\label{Section 1}

Fix any $\epsilon > 0$. In 2005, Goldston, Pintz and Y{\i}ld{\i}r{\i}m proved \cite{GPY2005, GPY2009} that there are arbitrarily large $x$ for which there are at least two primes in the interval $(x,x + \epsilon \log x]$, thus establishing the longstanding conjecture that there are infinitely many pairs of consecutive primes $p_r, p_{r+1}$ with $p_{r+1}-p_r < \epsilon \log p_r$. 

In \cite{GPY2006} they extended their original argument to prove that there are arbitrarily large $x$ for which there are at least two
primes in the interval $(x, x + \epsilon \log x]$ which are both in the arithmetic progression $a \bmod q$, provided $(q,a) = 1$. However one cannot deduce that these are consecutive primes for there might be a prime in-between them that is not $\equiv a \bmod q$. Hence one can only deduce that \emph{either} there are infinitely many pairs of consecutive primes $p_r \equiv p_{r+1} \equiv a \bmod q$ with $p_{r+1}-p_r < \epsilon \log p_r$, \emph{or} that there are infinitely many triples of consecutive primes $p_r,p_{r+1},p_{r+2}$ with $p_{r+2}-p_r < \epsilon \log p_r$. Presumably both statements are true but one can only deduce that one of them is true, and one does not know which one, from the result in \cite{GPY2006}.

In \cite{S2000}, Shiu proved an old conjecture of Chowla that there are infinitely many pairs of consecutive primes $p_r, p_{r+1}$ which are
both $\equiv a \bmod q$. Indeed he was even able to extend this to $k$ consecutive primes. In this paper we will combine the methods of Goldston-Pintz-Y{\i}ld{\i}r{\i}m and of Shiu to establish the following hybrid of those results:

\begin{Thm}\label{T1.1}
Let $q \ge 3$ and $a$ be integers with $(q,a) = 1$, and fix any $\epsilon > 0$. There exist infinitely many pairs of consecutive primes $p_r,p_{r+1}$ such that $p_r \equiv p_{r+1} \equiv a \bmod q$ and $p_{r+1} - p_r < \epsilon\log p_r$.
\end{Thm}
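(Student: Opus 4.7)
The plan is to combine Shiu's covering construction, which produces long arithmetic progressions in which every integer is either in the target class $a \bmod q$ or has a small prime factor, with the Goldston-Pintz-Y{\i}ld{\i}r{\i}m sieve for small gaps between primes. On its own, the adaptation of GPY in \cite{GPY2006} already yields arbitrarily large $x$ with two primes $\equiv a \bmod q$ in $(x, x+\epsilon \log x]$, but leaves open the possibility of intervening primes in other residue classes; Shiu's construction will be used to preclude precisely that possibility, so that the two primes produced are guaranteed to be consecutive.

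The first step is the Shiu-type setup. Fix a slowly growing parameter $z = z(N)$ (say a small power of $\log N$), put $W = \prod_{p \le z,\, p \nmid q} p$, and use a covering argument via the Chinese remainder theorem to locate a residue $b$ modulo $Wq$, coprime to $Wq$ and with $b \equiv a \bmod q$, having the following property: for every integer $m$ with $0 < m \le \epsilon \log N$ and $m \not\equiv 0 \bmod q$, some prime $p \le z$ with $p \nmid q$ divides $b + m$, whereas for every such $m$ that is divisible by $q$ one has $(b+m, W)=1$. The point is that once $n \equiv b \pmod{Wq}$ with $n$ much larger than $z$, every prime in $(n, n+\epsilon \log N]$ is forced to be $\equiv a \bmod q$: the only integers in that interval coprime to $W$ are those of the form $n+h$ with $q \mid h$, and all such integers are $\equiv b \equiv a \bmod q$.

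The second step is to apply the GPY machinery to the restricted set $\{n \in [N, 2N] : n \equiv b \pmod{Wq}\}$ and the admissible tuple
\[
\mathcal{H} = \{ h \in (0, \epsilon \log N] : q \mid h,\ (h, W) = 1 \}.
\]
The weighted sum $S_2 - C\log N \cdot S_1$ from \cite{GPY2009}, with $S_1$ and $S_2$ defined using Selberg-type weights $\lambda_R$ of level $R = N^{\theta}$ (for some admissible $\theta$) and $S_2$ picking out $n$ for which some $n+h_i$ is prime, is evaluated as in the original argument but with the local sieve factors modified to account for the fixed congruence modulo $Wq$. Since $\mathcal{H}$ has cardinality $\gg (\epsilon \log N)/q$, the tuple is large enough for the main term of this difference to be positive, so that some $n \equiv b \pmod{Wq}$ in $[N, 2N]$ yields at least two of the $n + h_i$ prime simultaneously.

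The principal obstacle is to perform this quantitative comparison uniformly in a modulus $Wq$ that grows (polylogarithmically) with $N$: one needs an equidistribution input such as the Bombieri-Vinogradov theorem valid in a range wide enough to absorb $Wq$, and one must carefully track the local sieve factors at the primes $p \le z$ dividing $W$. Once this analytic work is in place, the combination of the two steps produces infinitely many $n$ with two primes $p < p'$ in $(n, n+\epsilon\log n]$, both $\equiv a \bmod q$; the Shiu construction guarantees no other primes lie in $(p, p')$, so $p$ and $p'$ are consecutive primes in the sequence $p_1,p_2,\ldots$, which is the conclusion of Theorem \ref{T1.1}.
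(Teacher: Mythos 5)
Your overall architecture (a Shiu-type preselection of a modulus, followed by GPY weights over $n$ in a fixed residue class) superficially matches the paper's, but the first step as you state it contains a fatal gap. You require a residue $b \bmod Wq$ such that \emph{every} $m \in (0,\epsilon\log N]$ with $m \not\equiv 0 \bmod q$ satisfies $p \mid b+m$ for some $p \le z$, i.e.\ a complete covering of a density-$(1-1/q)$ subset of an interval of length $\epsilon\log N$ by one residue class modulo each prime $p \le z$. With $z$ a small power of $\log N$ this is provably impossible: the fundamental lemma of the sieve (equivalently, Iwaniec's bound on Jacobsthal's function) shows that $\gg \epsilon\log N/\log z$ such $m$ survive the sieving once $\epsilon \log N \ge z^{2+\delta}$. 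Enlarging $z$ until the covering becomes conceivable is exactly the Erd\H{o}s--Rankin large-gaps problem: the known constructions cover intervals only of length $z(\log z)^{1+o(1)}$ using the primes up to $z$, so one would need $z$ of size about $\log N/\log\log N$, whence $W = \exp((1+o(1))z) = N^{c/\log\log N}$. That is too large for the equidistribution input: the paper must keep its modulus $Q \le N^{c/(\log\log N)^2}$ precisely so that the saving $\exp(-c_2\log M/\log Y)$ in Lemma~\ref{L4.2} beats the $(\log N)^{O(k)}$ losses in the GPY error analysis. Your further claim that the surviving set $\mathcal{H}$ has $\gg (\epsilon\log N)/q$ elements also ignores the thinning by the factor $\prod_{p\le z}(1-1/p) \asymp 1/\log z$. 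In short, you cannot arrange that every prime in the window is $\equiv a \bmod q$.

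The paper (following Shiu, whose construction you have also misread --- he does not achieve a complete covering either) replaces the covering by a \emph{weighted pigeonhole}. One builds $Q$ so that among the $h \in (0,H]$ coprime to $Q$ the class $a \bmod q$ merely \emph{outnumbers} the others, $|S| - |T| \gg_q H\phi(Q)/Q$ (Proposition~\ref{P2.3}); the surviving exceptional residues $h \in T$ are then handled by inserting the term $-\sum_{h\in T}\vartheta(Qn+h)$ directly into the weighted sum $\mathscr{L}$. Positivity of $\mathscr{L}$ yields an $n$ with $|A_n| \ge |B_n| + 2$, where $A_n$ and $B_n$ count the primes in $(Qn,Qn+H]$ that are, respectively, $\equiv a$ and $\not\equiv a \bmod q$, and the pigeonhole principle then forces two \emph{consecutive} primes in $A_n$. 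This is the idea your proposal is missing; without it (or a solution of the covering problem above) the argument only reproduces the dichotomy of \cite{GPY2006} described in the introduction, namely that one cannot exclude an intervening prime from another residue class.
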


\section{Preliminaries}\label{Section 2}

In this section we will state two key technical propositions, to be proved in sections \ref{Section 4} and \ref{Section 5}. The first proposition requires some preparation. We begin by quoting the Landau-Page theorem, a proof of which can be found in \cite[Chapter 14]{D2000}. This theorem is used to handle problems arising from possible irregularities in the distribution of primes, hence in Bombieri-Vinogradov type theorems (see Lemma \ref{L4.2}), caused by potential Siegel zeros.
\begin{Lem}[Landau-Page theorem]\label{L2.1}
There exists a constant $c$ such that the following holds for any $Y > c$. There is at most one integer $q_0 \le Y$, and at most one real primitive character $\chi_0 \bmod q_0$, such that
\[
L(1 - \delta,\chi_0,q_0) = 0 \quad \textrm{for some} \quad \delta \le \frac{1}{3\log Y}.
\]
If $q_0$ exists, then $q_0 > (\log Y)^2$. We call $\chi_0$ an exceptional character and $q_0$ an exceptional modulus.
\end{Lem}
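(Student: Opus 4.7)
The plan is to follow the classical Landau-Page route, splitting the claim into (A) uniqueness of the exceptional pair $(q_0, \chi_0)$ and (B) the lower bound $q_0 > (\log Y)^2$.

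For (A) I would argue by contradiction: suppose two distinct real primitive characters $\chi_1 \bmod q_1$ and $\chi_2 \bmod q_2$, with $q_1, q_2 \le Y$, both admit a real zero $\beta_i$ satisfying $1 - \beta_i \le 1/(3\log Y)$. The key device is the auxiliary product
\[
F(s) = \zeta(s)\, L(s,\chi_1)\, L(s,\chi_2)\, L(s,\chi_1\chi_2),
\]
where $\chi_1\chi_2$ is non-principal, since distinct real primitive characters cannot satisfy $\chi_1 = \chi_2^{-1} = \chi_2$. A direct computation shows the Dirichlet coefficients of $-F'/F$ take the form $\Lambda(n)(1+\chi_1(n))(1+\chi_2(n)) \ge 0$, so $\log F$ has non-negative coefficients as a Dirichlet series. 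Combining this positivity with the simple pole of $F$ at $s=1$ and the vanishing of $F$ at $\beta_1, \beta_2$, a standard real-variable argument (integrating $-F'/F$ from $s = 1 + \eta$ down along the real axis for a suitable small $\eta$) extracts an inequality of the form $\min(1-\beta_1, 1-\beta_2) \gg 1/\log(q_1q_2) \gg 1/\log Y$, contradicting the hypothesis once $Y$ exceeds an absolute constant $c$.

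For (B) I would appeal to Page's effective inequality: for any real primitive character $\chi \bmod q$ and any real zero $\beta$ of $L(s,\chi)$,
\[
1 - \beta \gg \frac{1}{\sqrt{q}\,\log^2 q}.
\]
The derivation combines the Taylor expansion $L(1, \chi) = (1-\beta)\, L'(\xi, \chi)$ at the zero, the crude bound $L'(\xi, \chi) \ll \log^2 q$ on a neighborhood of $s = 1$, and Dirichlet's effective lower bound $L(1,\chi) \gg q^{-1/2}$ from class number theory. Substituting $1 - \beta_0 \le 1/(3\log Y)$ yields $\sqrt{q_0}\,\log^2 q_0 \gg \log Y$, whence $q_0 > (\log Y)^2$ follows after absorbing the slowly varying $\log^2 q_0$ factor into an adjustment of the threshold constant $c$.

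The main obstacle is in (A): extracting a clean effective inequality with an absolute constant requires carefully calibrating the pole contribution at $s = 1$ against the two nearby real zeros and verifying uniformity in $Y$. Once that positivity-plus-pole balance is in hand, the rest of the argument reduces to classical estimates for Dirichlet L-functions.
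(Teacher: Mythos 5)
First, note that the paper does not prove Lemma \ref{L2.1} at all: it is quoted (essentially in the form given in \cite{GPY2006}) with a pointer to \cite[Chapter 14]{D2000}, so there is no internal proof to compare against. Your part (A) is the standard Landau argument and is structurally sound, with two caveats. The inequality the positivity-plus-pole computation actually yields is $\max(1-\beta_1,1-\beta_2) \gg 1/\log(q_1q_2)$, not the minimum: taking $\sigma = 1+2\delta$ with $\delta := \max_i(1-\beta_i)$ in $\tfrac{1}{\sigma-\beta_1}+\tfrac{1}{\sigma-\beta_2} \le \tfrac{1}{\sigma-1}+C\log(q_1q_2)$ bounds the \emph{larger} gap from below; one of the two zeros may genuinely be a Siegel zero, so the smaller gap cannot be repelled. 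This still contradicts the hypothesis that both gaps are $\le 1/(3\log Y)$, but only if the resulting constant exceeds $2/3$ (since $\log(q_1q_2)\le 2\log Y$), and the crude computation above gives roughly $1/(12C\log Y)$ with $C$ coming from the $-L'/L$ expansions; so the specific constant $1/3$ requires the explicit numerical work you flag but do not carry out. With an unspecified absolute constant in place of $1/3$ your part (A) is complete and standard.

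Part (B) contains a genuine gap. From Page's inequality $1-\beta_0 \gg q_0^{-1/2}(\log q_0)^{-2}$ together with $1-\beta_0 \le 1/(3\log Y)$ you obtain $q_0^{1/2}(\log q_0)^2 \gg \log Y$, hence only $q_0 \gg (\log Y)^2/(\log\log Y)^4$. The factor $(\log q_0)^4 \asymp (\log\log Y)^4$ that you propose to ``absorb into the threshold constant $c$'' is not bounded --- it tends to infinity with $Y$ --- so the stated conclusion $q_0 > (\log Y)^2$ does not follow from this route. Recovering the bound exactly as stated would require a zero-gap of the shape $1-\beta \gg_\epsilon q^{-\epsilon}$ with some $\epsilon<1/2$, i.e.\ Siegel's theorem, at the cost of effectivity. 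For the purposes of this paper the discrepancy is harmless: the lower bound on $q_0$ is used only to establish \eqref{2.1}, where all that is needed is $q_0 \gg \log N$, and $(\log Y)^2/(\log\log Y)^4$ with $Y = N^{1/(\log\log N)^2}$ comfortably exceeds this; but as a proof of the lemma as literally stated, your part (B) does not close.
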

Throughout, we fix a number $\epsilon > 0$, we let $H$ be a real parameter tending monotonically to infinity, and we set $N := \exp(H/\epsilon)$, that is $H = \epsilon\log N$. If there is an exceptional modulus $q_0 := q_0(H) \le \exp(H/\epsilon(\log(H/\epsilon))^2) = N^{1/(\log\log N)^2}$, let $p_0 := p_0(H)$ be its greatest prime factor; otherwise let $p_0 = 1$.
 
For all sufficiently large $H$, either
\begin{align}\label{2.1}
\textrm{$p_0 = 1$ or $p_0$ is a prime with $p_0 > \log H$.}
\end{align}
To see this, note that all real primitive characters are products of Legendre symbols with different odd primes, and possibly either the unique real character $\bmod$ 4 or one of the two primitive real characters $\bmod$ 8. Thus if $q_0$ exists it is of the form $2^{\alpha}p_1\cdots p_k$, where $\alpha \le 3$ and the $p_i$'s are distinct odd primes. If this is the case and $p_0 \le \log H$, then the prime number theorem implies $q_0 \ll \exp((1+o(1))\log H) \ll \log N$, but Lemma \ref{L2.1} states that  $q_0 > (\log N/(\log\log N)^2)^2$.

We let $Q := Q(H)$ be a positive integer, upon which we will impose the following conditions:
\begin{align}
& \textrm{$Q$ is composed only of primes $p \le H$,} \label{2.2} \\
& \textrm{$Q$ is divisible by all primes $p \le \log H$,} \label{2.3} \\
& \textrm{$Q \le \exp\br{cH/(\log H)^2}$ for some constant $c > 0$,} \label{2.4} \\
& \textrm{if $p_0(H) \ne 1$ then $p_0(H)$ does not divide $Q$.} \label{2.5}
\end{align}
We let
\begin{align}\label{2.6}
\h := \{Qx+h_1,\ldots,Qx+h_{k}\}, \quad h_1,\ldots,h_{k} \in [1,H] \cap \mathbb{Z},
\end{align}
denote a set of distinct linear forms, and we define
\begin{align}\label{2.7}
\Lambda_R(n;\h,j) := \frac{1}{j!} \Sums{d \mid P(n;\h)}{d \le R} \mu(d)(\log R/d)^j, 
\end{align}
where $\sum'$ denotes summation over indices coprime with $Qp_0$, and
\begin{align}\label{2.8}
P(n;\h) := (Qn+h_1)\cdots (Qn+h_{k}).
\end{align}
Finally, we let
\begin{align*}
\vartheta(n) := 
\begin{cases}
\log n & \textrm{if $n$ is prime,} \\
0      & \textrm{otherwise.}
\end{cases}
\end{align*}

\begin{Prop}\label{P2.2}
Given $\epsilon > 0$ and sufficiently large $H$, let $N$ and $p_0 = p_0(H)$ be as defined earlier, and let $Q = Q(H)$ be a positive integer satisfying \eqref{2.2} -- \eqref{2.5}. Fix positive integers $k$ and $\ell$, and let $\h = \{Qx+h_1,\ldots,Qx+h_k\}$ be a set of distinct linear forms with $h_1,\ldots,h_k \in [1,H] \cap \mathbb{Z}$ and $(Q,h_1,\ldots,h_k) = 1$. Let $h \in [1,H] \cap \mathbb{Z}$ and suppose $(Q,h) = 1$, and let $R = N^{1/4 - \epsilon'}$ for some $\epsilon' \in (0,1/4)$. As $H \to \infty$, we have
\begin{align}\label{2.9}
\frac{1}{N}\br{\frac{\phi(Q)}{Q}}^{k} \sum_{N < n \le 2N} \Lambda_R(n;\h,k+\ell)^2 \sim 
\binom{2\ell}{\ell} \frac{(\log R)^{k+2\ell}}{(k+2\ell)!}
\end{align}
and
\begin{align}\label{2.10}
& \frac{1}{N} \br{\frac{\phi(Q)}{Q}}^{k} \sum_{N < n \le 2N} \vartheta(Qn+h) \Lambda_R(n;\h,k+\ell)^2 \nonumber \\ & \hspace{100pt} \sim
\begin{cases}
\displaystyle\frac{Q}{\phi(Q)}\binom{2\ell}{\ell} \frac{(\log R)^{k+2\ell}}{(k+2\ell)!}  & \textrm{if $Qx+h \not\in \h$,} \\ & \\
\displaystyle\binom{2(\ell+1)}{\ell + 1} \frac{(\log R)^{k+2\ell+1}}{(k+2\ell+1)!} & \textrm{if $Qx+h \in \h$.} 
\end{cases}
\end{align}
\end{Prop}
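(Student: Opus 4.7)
The plan is to follow the Goldston--Pintz--Y{\i}ld{\i}r{\i}m contour-integration method, adapted to the progression $\bmod Q$ via the primed summation. I would begin by expanding the square in \eqref{2.9}:
\[
\sum_{N < n \le 2N} \Lambda_R(n;\h,k+\ell)^2 = \frac{1}{((k+\ell)!)^2}\Sums{d_1,d_2 \le R}{(d_1d_2,Qp_0)=1} \mu(d_1)\mu(d_2)\br{\log\tfrac{R}{d_1}}^{k+\ell}\br{\log\tfrac{R}{d_2}}^{k+\ell} T(d_1,d_2),
\]
with $T(d_1,d_2) := \#\{N<n\le 2N:[d_1,d_2]\mid P(n;\h)\}$. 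Since $(d_1d_2,Q)=1$, the Chinese remainder theorem gives $T(d_1,d_2) = N\,\omega_\h([d_1,d_2])/[d_1,d_2] + O(\omega_\h([d_1,d_2]))$, where $\omega_\h$ is multiplicative with $\omega_\h(p)=\#\{h_i\bmod p\}\le k$. Since $R^2 \le N^{1/2-2\epsilon'}$ and $\omega_\h(d)\le k^{\omega(d)}\ll d^{o(1)}$, the total error is $o(N)$.

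The main term I would evaluate by writing $(\log R/d)^{k+\ell}/(k+\ell)! = \frac{1}{2\pi i}\int_{(c)} R^{s}d^{-s}s^{-(k+\ell+1)}\,ds$, turning the expression into a double contour integral
\[
\frac{N}{(2\pi i)^2}\iint F(s_1,s_2)\,\frac{R^{s_1+s_2}}{s_1^{k+\ell+1}s_2^{k+\ell+1}}\,ds_1\,ds_2,
\]
where the Dirichlet series factors as an Euler product
\[
F(s_1,s_2) = \prod_{p\nmid Qp_0}\br{1 - \frac{\omega_\h(p)}{p^{1+s_1}} - \frac{\omega_\h(p)}{p^{1+s_2}} + \frac{\omega_\h(p)}{p^{1+s_1+s_2}}}.
\]
Pulling out the natural $\zeta$-quotient $\zeta(1+s_1+s_2)^k/(\zeta(1+s_1)^k\zeta(1+s_2)^k)$ leaves a factor $G_\h(s_1,s_2)$ holomorphic and bounded in a neighbourhood of the origin. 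Shifting contours past $s_i=0$ and collecting the residue produces the combinatorial factor $\binom{2\ell}{\ell}(\log R)^{k+2\ell}/(k+2\ell)!$ times $G_\h(0,0)$. A direct computation shows $G_\h(0,0) = \prod_{p\nmid Qp_0}(1-1/p)^{-k}(1-\omega_\h(p)/p)$; since $\omega_\h(p)=k$ for $p>H$ and conditions \eqref{2.3},\eqref{2.4} restrict the remaining primes to $(\log H,H]$, standard Mertens-type estimates and the hypothesis $(Q,h_1,\ldots,h_k)=1$ give $G_\h(0,0)\sim (\phi(Q)/Q)^{-k}$ as $H\to\infty$, which cancels the prefactor and yields \eqref{2.9}.

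For \eqref{2.10} the same scheme applies with $T(d_1,d_2)$ replaced by $\sum_{N<n\le 2N,\,[d_1,d_2]\mid P(n;\h)}\vartheta(Qn+h)$. After CRT this becomes a sum of $\vartheta$ over an arithmetic progression of modulus $Q[d_1,d_2]$, which by Lemma~\ref{L4.2} (Bombieri--Vinogradov, with condition \eqref{2.5} used to discard the potential exceptional modulus through the coprimality with $p_0$) equals $N\,\omega^{*}_\h([d_1,d_2])/(\phi(Q)\phi([d_1,d_2]))$ plus an error summing to $o(N/\log N)$, provided $R=N^{1/4-\epsilon'}$. Here $\omega^{*}_\h(p)=\omega_\h(p)-1$ if $p\mid Qn+h$ is forced (i.e., $h\equiv h_i\bmod p$ for some $i$, the only situation contributing when $Qx+h\in\h$) and $\omega^{*}_\h(p)=\omega_\h(p)$ otherwise. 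The resulting Euler product differs from the previous one by an extra factor of $(1-1/p^{1+s_1+s_2})^{-1}$ or $(1-1/p^{1+s_1})^{-1}(1-1/p^{1+s_2})^{-1}$ at each relevant prime, which changes the $\zeta$-quotient to $\zeta(1+s_1+s_2)^{k-1}/(\zeta(1+s_1)^{k-1}\zeta(1+s_2)^{k-1})$ when $Qx+h\not\in\h$ (giving the $(\log R)^{k+2\ell}$ main term with a $Q/\phi(Q)$ factor from the missing local factor at the prime $Qn+h$), and to $\zeta(1+s_1+s_2)^{k+1}/(\zeta(1+s_1)^k\zeta(1+s_2)^k)$ when $Qx+h\in\h$, raising the order of the pole by one and producing the $\binom{2(\ell+1)}{\ell+1}(\log R)^{k+2\ell+1}/(k+2\ell+1)!$ main term.

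The main obstacle will be the $\vartheta$-twisted case: ensuring the Bombieri--Vinogradov error is genuinely $o(N/\log N)$ after being multiplied by the $(\log R)^{O(k)}$ from the outer sum, and carefully handling the local factor modification at the distinguished prime $Qn+h$ to produce the correct singular-series cancellation against $(\phi(Q)/Q)^k$ and the precise combinatorial constants. Ruling out the exceptional zero via \eqref{2.5} is delicate but already encoded in Lemma~\ref{L2.1} and the hypothesis $p_0\nmid Q$.
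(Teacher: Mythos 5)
Your overall strategy --- expand the square, apply the Chinese remainder theorem, pass to the double contour integral with a $\zeta$-quotient, and invoke the modified Bombieri--Vinogradov theorem (Lemma \ref{L4.2}) for the $\vartheta$-twisted sum --- is exactly the paper's, which defers the routine parts to Lemmas 1 and 2 of \cite{GMPY2006}. Your treatment of \eqref{2.9} is essentially right, with one slip: your formula for $G_\h(0,0)$ restricts the factor $(1-1/p)^{-k}$ to $p\nmid Qp_0$, which would give $G_\h(0,0)\sim 1$ rather than $(Q/\phi(Q))^{k}$. Since the $\zeta$-quotient runs over all primes, the correct value is $G_\h(0,0)=\prod_{p\nmid Qp_0}\br{1-\tfrac{\vd{p}}{p}}\prod_{p}\br{1-\tfrac{1}{p}}^{-k}=\mathfrak{S}(\h)\prod_{p\mid p_0}\br{1-\tfrac{\vd{p}}{p}}^{-1}$, after which \eqref{4.4} and \eqref{2.1} give the cancellation you want.

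The genuine problem is the local-density bookkeeping in \eqref{2.10}: you have the two cases backwards. With $\os{p}=\od{p}\setminus\{m : p\mid Qm+h\}$, if $Qx+h\notin\h$ then for $p>H$ the residue $-hQ^{-1}$ is distinct from every $-h_iQ^{-1}$, so $\vs{p}=k$ for all large $p$: the $\zeta$-exponent stays at $k$ (not $k-1$), the power of $\log R$ stays at $k+2\ell$, and the factor $Q/\phi(Q)$ comes not from a ``missing local factor'' but from the prefactor $QN/\phi(Q[d_1,d_2])=(Q/\phi(Q))\cdot N/\phi([d_1,d_2])$ in the prime count. If $Qx+h\in\h$, then $h=h_i$ for some $i$, so $\vs{p}=\vd{p}-1=k-1$ for all large $p$: the exponent \emph{drops} to $k-1$, it does not rise to $k+1$. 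Indeed your claimed exponents contradict the powers of $\log R$ you assert: exponent $j$ in the quotient combined with the poles $s_i^{-(k+\ell+1)}$ yields $(\log R)^{2k+2\ell-j}$, so $j=k-1$ gives $k+2\ell+1$ (not $k+2\ell$) and $j=k+1$ gives $k+2\ell-1$ (not $k+2\ell+1$). Moreover, the coefficient $\binom{2(\ell+1)}{\ell+1}$ in the second case arises because the individual pole orders at $s_1=s_2=0$ increase from $\ell+1$ to $\ell+2$, which happens precisely when $j$ drops to $k-1$; your scheme would leave the coefficient at $\binom{2\ell}{\ell}$. The paper sidesteps all of this by noting that for $n\in(N,2N]$ with $Qn+h$ prime one has $\Lambda_R(n;\h,k+\ell)=\Lambda_R(n;\h\setminus\{Qx+h\},k+\ell)$, so the case $Qx+h\in\h$ reduces to the first case with $k\mapsto k-1$, $\ell\mapsto\ell+1$; you should either adopt that reduction or redo the Euler product with the corrected densities.
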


\begin{Prop}\label{P2.3}
Let $q \ge 3$ and $a$ be integers with $(q,a) = 1$, and for a given $H$, let $p_0 = p_0(H)$ be as defined earlier. There is an infinite sequence of integers $H_1 < H_2 < \ldots$ such that for any $i$, taking $H = H_i$, there exists a positive integer $Q = Q(H)$, divisible by $q$ and satisfying \eqref{2.2} -- \eqref{2.5}, such that
\begin{align}\label{2.11}
\abs{S} - \abs{T} \gg_q H\br{\frac{\phi(Q)}{Q}},
\end{align}
where
\begin{align}\label{2.12}
\begin{split}
S = S(H) & := \{h \in (0,H] : \textrm{$(Q,h) = 1$ and $h \equiv a \bmod q$}\}, \\
T = T(H) & := \{h \in (0,H] : \textrm{$(Q,h) = 1$ and $h \not\equiv a \bmod q$}\}.
\end{split}
\end{align}
The implied constant in \eqref{2.11} depends at most on $q$. 
\end{Prop}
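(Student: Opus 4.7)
The plan is to adapt Shiu's construction from \cite{S2000} to build $Q$ so that the integers $h \in (0, H]$ coprime to $Q$ cluster in the residue class $a \bmod q$.

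I would take $Q = q \cdot M \cdot \prod_{p \le \log H,\, p \nmid q p_0} p$, where $M$ is a product of primes drawn from carefully selected residue classes modulo $q$, with the primes of $M$ lying in $(\log H,\, cH/(\log H)^2]$ (so that (2.2)--(2.4) hold) and $p_0(H)$ omitted (securing (2.5)). Following Shiu, the primes in $M$ are chosen in specific classes so as to remove most $h$'s that lie in residue classes other than $a$.

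To estimate $|S|$ and $|T|$, I would apply the Fundamental Lemma of the Sieve (valid since the primes of $Q$ can be kept below roughly $H^{1/2}$) to compute $|S| + |T| \sim H\phi(Q)/Q$. Then, decomposing via Dirichlet characters mod $q$ and applying M\"obius inversion over divisors of $M$, one expresses
$$|S| - |T| = \tfrac{2 - \phi(q)}{\phi(q)}\bigl(|S| + |T|\bigr) + \tfrac{2}{\phi(q)} \sum_{\chi \neq \chi_0} \bar{\chi}(a)\, S_\chi(H, Q),$$
with $S_\chi(H, Q) = \sum_{d \mid M,\, (d, q) = 1} \mu(d) \chi(d)\, T_\chi(\lfloor H/d\rfloor)$ and $T_\chi(X) = \sum_{m \le X}\chi(m)$ periodic mod $q$. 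By selecting the primes of $M$ from a class $b \bmod q$ for which $\mu(d) \chi(d) \bar\chi(a) = +1$ uniformly over squarefree $d \mid M$ for as many non-principal $\chi$ as possible, one converts the contribution of these ``aligned'' characters into a non-negative count of divisors $d \mid M$ with $\lfloor H/d\rfloor$ in a distinguished class mod $q$. A pigeonhole argument over $H$ in a dyadic range $(N, 2N]$ then extracts an infinite subsequence $H_i$ on which the resulting lower bound exceeds $c_q \cdot H\phi(Q)/Q$ for a positive constant $c_q$ depending only on $q$.

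The main obstacle will be the case $\phi(q) \ge 3$: the principal-character term is negative of order $H\phi(Q)/Q$ and must be overcome by the non-principal contributions. When $(\mathbb{Z}/q)^\times$ has odd-order elements, no single residue class $b$ aligns all non-principal characters, so the analysis must weigh the aligned contributions (furnishing a positive main term) against the unaligned ones (controlled by $L^2$ / square-root cancellation). Managing this balance, while keeping sieve and character-sum error terms under control (using Landau--Page together with the exclusion of $p_0$ to handle potential Siegel zeros), is the technical heart of the argument.
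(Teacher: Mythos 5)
You have the right starting point---a Shiu-type modulus $Q$ built from primes in selected residue classes mod $q$ so that the integers coprime to $Q$ concentrate in the class $a$---but the analytic machinery you wrap around it would not close the argument. First, the asymptotic $|S|+|T|\sim H\phi(Q)/Q$ is not available from the fundamental lemma: to create the bias you must include in $Q$ primes as large as $H/(\log H)^2$, so the sifting level is comparable to the length of the interval and the survivor count is governed by Buchstab/Dickman-type behaviour rather than by $H\prod(1-1/p)$. The paper uses the sieve only for an \emph{upper} bound (its (5.28)); the lower bound for $|S|$ comes from an explicit construction ($h=pm$ with $p\equiv a\bmod q$, $p>X/t(X)$, and $m$ a product of primes $\equiv 1\bmod q$), and even then it is established only for \emph{some} $H$ in a window $[X/(\log X)^A,X]$, via a contradiction argument combining Rankin's trick with smooth-number estimates (Lemmas \ref{L5.2}--\ref{L5.4}). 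That is exactly why the proposition asserts only an infinite sequence $H_i$; your dyadic pigeonhole does not engage with this difficulty.

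Second, and more seriously, the character decomposition with principal term $\frac{2-\phi(q)}{\phi(q)}(|S|+|T|)$ cannot be rescued by square-root cancellation in the unaligned characters. To obtain $|S|-|T|\gg H\phi(Q)/Q$ you need $|S|>\tfrac12(|S|+|T|)$, i.e.\ the coprime integers must lie \emph{almost entirely} in the class $a$; this forces the total non-principal contribution to be of the same order as the main term with essentially no cancellation, which is the opposite of what square-root cancellation would give you, and no choice of a single class $b$ aligns all characters once $(\mathbb{Z}/q\mathbb{Z})^\times$ is nontrivial enough. The mechanism that actually works is combinatorial: with the paper's choice of $\mathscr{P}(H)$ (small primes $\equiv 1$, all primes up to $H/(\log H)^2$ outside the classes $1$ and $a$, the class-$1$ primes in $[t(H),H/(\log H)^2]$, and the class-$a$ primes up to $H/t(H)$), any $h\in(0,H]$ coprime to $Q$ with $h\not\equiv a\bmod q$ is forced to be a prime, a product of two primes in narrow complementary ranges, or a $t(H)$-smooth number all of whose prime factors are $\equiv 1\bmod q$; each configuration is counted directly to give $|T|\ll H/\log H=o(H\phi(Q)/Q)$. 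Making $T$ negligible outright, rather than fighting a character-sum balance, is the step your proposal is missing.
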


\section{Proof of Theorem 1.1}\label{Section 3}

Fix integers $q \ge 3$ and $a$ with $(q,a) = 1$. Recall that $H = \epsilon\log N$, with $\epsilon > 0$ fixed, and $p_0$ is the greatest prime factor of the exceptional modulus $q_0 \le N^{1/(\log\log N)^2}$, if it exists, otherwise $p_0 = 1$. We choose $H$, $Q = Q(H)$, $S = S(H)$, and $T = T(H)$ as in Proposition \ref{P2.3}, so that $Q$ is divisible by $q$ and satisfies \eqref{2.2} -- \eqref{2.5}, and 
\begin{align}\label{3.1}
\frac{Q}{\phi(Q)}\frac{\abs{S} - \abs{T}}{\log N} \ge c(q)\epsilon
\end{align}
for some constant $c(q) > 0$, depending on $q$ at most.

We fix positive integers $k,\ell$ (to be specified later), and we let $\h = \{Qx+h_1,\ldots,Qx+h_k\}$ be a set of distinct linear forms such that, for each $i$, $h_i \in [1,H] \cap a \bmod q$ and $(Q,h_i) = 1$. We let $R = N^{1/4 - \epsilon'}$ with $0 < \epsilon' < 1/4$ (to be specified later), and we put
\begin{align*}
& \mathscr{L} := \\ & \frac{1}{N}\br{\frac{\phi(Q)}{Q}}^k \sum_{N < n \le 2N} 
\br{\sum_{h \in S} \vartheta(Qn+h) - \sum_{h \in T} \vartheta(Qn+h) - \log 3QN}\Lambda_{R}(n;\h,k+\ell)^2.
\end{align*}
We now show that if $\mathscr{L} > 0$ for a sequence of numbers $N$, tending to infinity, then Theorem \ref{T1.1} follows.

Let 
\begin{align*}
A_n & := \{p \in (Qn,Qn+H] : p \equiv a \bmod q\} = \{p : p = Qn+h, h \in S\} \\
B_n & := \{p \in (Qn,Qn+H] : p \not\equiv a \bmod q\} = \{p : p = Qn+h, h \in T\}.
\end{align*}
If $\mathscr{L} > 0$, then there is some $n \in (N,2N]$ such that
\begin{align*}
\abs{A_n}\log (Qn+H) \ge \sum_{h \in S} \vartheta(Qn+h) >  \sum_{h \in T} \vartheta(Qn+h) + \log 3QN \ge
\abs{B_n}\log Qn + \log 3QN.
\end{align*}
Now 
\[
\abs{A_n}\log\br{1 + H/Qn} \le \abs{A_n}H/Qn \le H^2/QN < \log(3/2)
\]
if $N$ is sufficiently large, and so
\[
\log(3/2) + \br{\abs{A_n} - \abs{B_n}}\log Qn > \log 3QN
\]
and hence, as $n \le 2N$, $\abs{A_n} - \abs{B_n} > 1$. But as these are integers, $\abs{A_n} \ge \abs{B_n} + 2$, and so, by the pigeonhole principle, $A_n$ contains a pair of consecutive primes $p_r,p_{r+1}$. These primes satisfy $p_{r+1} - p_r < H < \epsilon\log QN < \epsilon\log p_r$.

Now, by our choice of $\h$, a straightforward application of Proposition \ref{P2.2} yields
\begin{multline*}
\mathscr{L} =
\binom{2\ell}{\ell}\frac{(\log R)^{k+2\ell}}{(k+2\ell)!} \\ \times
\Bigg\{\frac{Q}{\phi(Q)}\sums{h \in S}{Qx+h \not\in \h} 1 + 
\frac{2(2\ell+1)}{\ell+1}\frac{\log R}{k+2\ell+1} \sums{h \in S}{Qx+h \in \h} 1 -
\frac{Q}{\phi(Q)}\sum_{h \in T} 1 - (1+o(1))\log 3QN\Bigg\}.
\end{multline*}
We have
\[
\sums{h \in S}{Qx+h \in \h} 1 = k, \qquad \sums{h \in S}{Qx+h \not\in \h} 1 = \abs{S} - k,
\]
$\log R = (1/4 - \epsilon')\log N$, and $\log 3QN \sim \log N$ by \eqref{2.4}, therefore
\begin{align*}
& \mathscr{L} = \binom{2\ell}{\ell}\frac{(\log R)^{k+2\ell}}{(k+2\ell)!}\log N \\ & \hspace{60pt} \times
\Br{\frac{Q}{\phi(Q)}\frac{\abs{S} - \abs{T}}{\log N} + 
\frac{2(2\ell+1)}{\ell+1}\frac{k}{k+2\ell+1}\br{\frac{1}{4} - \epsilon'} - (1 + o(1))}.
\end{align*}
We have written $o(1)$ for $kQ/(\phi(Q)\log N)$, because $Q/\phi(Q) \ll \log\log Q \ll \log\log N$. 

By choosing $\ell = [\sqrt{k}]$ and $k$ sufficiently large, the bracketed expression $\Br{\cdots}$ above is, by \eqref{3.1},
\[
\ge c(q)\epsilon + 1 - 5\epsilon' - (1+o(1)) =
c(q)\epsilon - 5\epsilon' - o(1).
\]
By choosing $\epsilon' = c(q)\epsilon/10$ (we may assume that $\epsilon$ is small enough so that $\epsilon' < 1/4$), we deduce that 
\begin{align}\label{3.3}
\mathscr{L} \gg_{k} c(q)\epsilon(\log N)^{k+2\ell+1}
\end{align}
holds if $N$ is sufficiently large. By Proposition \ref{P2.3}, we may choose $H$, equivalently $N$, from a sequence of numbers tending to infinity, and Theorem \ref{T1.1} follows.

\section{Proof of Proposition 2.2}\label{Section 4}

The estimates \eqref{2.9} and \eqref{2.10} of Proposition \ref{P2.2} are essentially the same as estimates already in the literature, so we will only outline a proof of each of them, referring to \cite{GMPY2006} and \cite{GPY2006} for details. 

Let $Q = Q(H)$ satisfy \eqref{2.2} and \eqref{2.3}. For a set of distinct linear forms $\h$, as in \eqref{2.6}, and positive integers $d$, we define
\[
\od{d} = \O{d}{\h} := \{n \bmod d : P(n;\h) \equiv 0 \bmod d\},
\]
where $P(n;\h)$ is as in \eqref{2.8}. A Chinese remainder theorem argument shows that $n \bmod d \in \od{d}$ if and only if $p^r \mid\mid P(n;\h)$ for every $p^r \mid\mid d$, and so $\abs{\od{d}}$ defines a multiplicative function of $d$. Thus, if we define
\begin{align}\label{4.1}
\begin{split}
\lambda_R(d;j) := 
\begin{cases}
\frac{1}{j!}\mu(d)(\log R/d)^j & \textrm{if $d \le R$,} \\
0 & \textrm{if $d > R$,}
\end{cases}
\end{split}
\end{align}
we see from \eqref{2.7} that
\begin{align}\label{4.2}
\Lambda_R(n;\h,j) := \frac{1}{j!} \Sums{d \mid P(n;\h)}{d \le R} \mu(d)(\log R/d)^j = \Sums{n \bmod d}{\in \od{d}} \lambda_R(d;j).
\end{align}

We call $\h$ admissible if $\v < p$ for all $p$, and one can prove that this is equivalent to $\mathfrak{S}(\h) \ne 0$, where 
\[
\mathfrak{S}(\h) := \prod_{p}\br{1 - \frac{\v}{p}}\br{1 - \frac{1}{p}}^{-k}
\]
is the singular series for $\h$.

\begin{Lem}\label{L4.1}
Let $H$ be a real number, let $Q = Q(H)$ be a positive integer satisfying \eqref{2.2} and \eqref{2.3}, and let $\h$ be as in \eqref{2.6}, with $k$ fixed. We have
\begin{align}\label{4.3}
\v = k \quad \textrm{for all} \quad p > H.
\end{align}
For $k \le \log H$, $\h$ is admissible if and only if $(Q,h_1\cdots h_{k}) = 1$. Moreover, as $H \to \infty$, for $(Q,h_1\cdots h_k) = 1$ we have
\begin{align}\label{4.4}
\mathfrak{S}(\h) \sim \br{\frac{Q}{\phi(Q)}}^{k}.
\end{align}
\end{Lem}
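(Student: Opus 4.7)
The plan is to dispatch the three assertions in order, with the singular series asymptotic \eqref{4.4} being the substantive step. For \eqref{4.3}, observe that if $p > H$ then \eqref{2.2} gives $(p,Q) = 1$, so $Q$ is invertible modulo $p$, and $Qn + h_i \equiv 0 \pmod{p}$ reduces to $n \equiv -h_i Q^{-1} \pmod{p}$. The $k$ residues $-h_i Q^{-1} \bmod p$ are distinct because $h_i, h_j \in [1,H]$ with $h_i \ne h_j$ force $h_i \not\equiv h_j \pmod{p}$ once $p > H$; hence $\v = k$.

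For the admissibility equivalence under $k \le \log H$, suppose first that some prime $p$ divides both $Q$ and some $h_i$. Then $Qn + h_i \equiv 0 \pmod{p}$ for every $n$, giving $\od{p} = \mathbb{Z}/p\mathbb{Z}$ and $\v = p$, so $\h$ is not admissible. Conversely, assume $(Q, h_1 \cdots h_k) = 1$. For $p \mid Q$ we then have $Qn + h_i \equiv h_i \not\equiv 0 \pmod{p}$ for every $i$, so $\od{p} = \emptyset$ and $\v = 0$. For $p \nmid Q$, condition \eqref{2.3} forces $p > \log H \ge k$, and the analysis used for \eqref{4.3} gives $\v \le k < p$. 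Admissibility follows.

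For \eqref{4.4}, I would split the Euler product as
\[
\mathfrak{S}(\h) = \prod_{p \mid Q}\br{1 - \tfrac{1}{p}}^{-k}\prod_{p \nmid Q}\br{1 - \tfrac{\v}{p}}\br{1 - \tfrac{1}{p}}^{-k} = \br{\frac{Q}{\phi(Q)}}^{k}\Pi,
\]
so it suffices to show $\Pi \to 1$ as $H \to \infty$. Taking logarithms, primes $p > H$ (where $\v = k$) each contribute $-k(k-1)/(2p^2) + O(k^3/p^3)$ by Taylor expansion, summing to $O(k^2/H) = o(1)$. For $\log H < p \le H$ with $p \nmid Q$, each summand in $\log \Pi$ is $(k - \v)/p + O(k^2/p^2)$, reducing the task to bounding $\sum (k - \v)/p$ over this range.

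This last sum is the main obstacle, though shallow. Since $\v$ counts the distinct residues $h_i \bmod p$, the combinatorial bound $k - \v \le \#\{(i,j): i < j,\ p \mid h_i - h_j\}$ lets me swap summation orders:
\[
\sum_{\substack{\log H < p \le H \\ p \nmid Q}} \frac{k - \v}{p} \le \sum_{1 \le i < j \le k}\ \sum_{\substack{p > \log H\\ p \mid h_i - h_j}} \frac{1}{p}.
\]
For each pair, $|h_i - h_j|$ is a nonzero integer of size at most $H$; its distinct prime divisors exceeding $\log H$ number at most $\log H / \log\log H$ (their product being $\le H$), each contributing $\le 1/\log H$, so the inner sum is $O(1/\log\log H)$. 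Summing over the $\binom{k}{2}$ pairs yields $o(1)$, so $\log \Pi = o(1)$ and \eqref{4.4} follows.
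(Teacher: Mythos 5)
Your proposal is correct and follows essentially the same route as the paper: both arguments reduce \eqref{4.4} to showing the product over $p \nmid Q$ tends to $1$, exploit that $\v = k$ unless $p$ divides some difference $h_i - h_j$, and control the deviation using the fact that an integer of size at most $H^{O(k^2)}$ has few prime factors exceeding $\log H$. The only (cosmetic) difference is that you work additively with $\sum (k-\v)/p$ and a per-pair count of prime divisors of $|h_i - h_j|$, whereas the paper factors the product over $p \mid \Delta$ with $\Delta = \prod_{i<j}|h_i - h_j|$ and bounds $\omega(\Delta)$.
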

\begin{proof}
For primes $p$ that do not divide $Q$, we have
\begin{align*}
\od{p} = \{-h_1Q^{-1},\ldots,-h_kQ^{-1}\} \bmod p,
\end{align*}
and hence $1 \le \v \le \min(k,p)$. For such $p$, we have $\v = k$ if and only if the $-h_iQ^{-1}$ are all distinct modulo $p$, that is if and only if $p \nmid \Delta$, where
\[
\Delta = \Delta(\h) := \prod_{1 \le i < j \le k} \abs{h_i - h_j}.
\]
By \eqref{2.2}, $p > H$ implies $p \nmid Q$, and since $1 \le \abs{h_i - h_j} \le H$ for every $i,j$, $p > H$ also implies $p \nmid \Delta$, and hence $\v = k$. We have established \eqref{4.3}.

If some prime $p$ divides $(Q,h_1\cdots h_k)$, then $P(n;\h) \equiv h_1\cdots h_k \equiv 0 \bmod p$ for every $n \bmod p$, hence $\v = p$, and so $\h$ is not admissible if $(Q,h_1\cdots h_k) \ne 1$. If $(Q,h_1\cdots h_k) = 1$, then $P(n;\h) \equiv h_1\cdots h_k \not\equiv 0 \bmod p$, and hence $\v = 0$, for every $p$ dividing $Q$. For every other $p$ we have $1 \le \v \le \min(k,p)$. Then for $k \le \log H$ and $p \nmid Q$, we have $1 \le \v \le k \le \log H < p$ by \eqref{2.3}, hence $\h$ is admissible.

Now assume $H$ is large enough so that $\log H \ge 2k$, and suppose $(Q,h_1\cdots h_k) = 1$. Then for \eqref{4.4}, since $\v = 0$ if $p \mid Q$, it suffices to show that
\begin{align}\label{4.5}
\mathfrak{S}^{\prime}(\h) := \prod_{p \nmid Q} \br{1 - \frac{\v}{p}}\br{1 - \frac{1}{p}}^{-k} \sim 1
\end{align}
as $H$ tends to infinity. We break $\mathfrak{S}^{\prime}(\h)$ into two products according as $p \mid \Delta$ or $p \nmid \Delta$, and use the fact that $\v = k$ for $p \nmid Q\Delta$: 
\begin{align}\label{4.6}
\begin{split}
\mathfrak{S}^{\prime}(\h) & = \prod_{p \nmid Q} \br{1 - \frac{k}{p}}\br{1 + \frac{k - \v}{p - k}}\br{1 - \frac{1}{p}}^{-k} \\ & = 
\prod_{p \nmid Q} \br{1 - \frac{k}{p}}\br{1 - \frac{1}{p}}^{-k} 
\prods{p \nmid Q}{p \mid \Delta} \br{1 + \frac{k - \v}{p - k}}.
\end{split}
\end{align}
In this product $p - k \ne 0$ because, by \eqref{2.3}, $p \nmid Q$ implies $p > \log H \ge 2k$. For the same reason, the logarithm of the first product of the last line of \eqref{4.6} is
\[
\sum_{p \nmid Q}\Br{\br{-\frac{k}{p} - \frac{k^2}{2p^2} - \cdots} - k\br{-\frac{1}{p} - \frac{1}{2p^2} - \cdots}} \ll
k^2\sum_{p > \log H} \frac{1}{p^2} \ll \frac{k^2}{\log H\log \log H}.
\]
For the second product, note that since $k/\log H \le 1/2$, we have
\[
0 < \frac{k - \v}{p - k} \le \frac{k}{p-k} \le \frac{2k}{p} < 1.
\] 
Hence the logarithm of the second product is
\[
\le \sums{p \mid \Delta}{p > \log H} \log \br{1 + \frac{k - \v}{p - k}} \ll \sums{p \mid \Delta}{p > \log H} \frac{k}{p} \ll 
\frac{k}{\log H} \sum_{p \mid \Delta} 1 \ll 
\frac{k\log \Delta}{\log H \log\log \Delta} \ll 
\frac{k^3}{\log\log H}
\]
by the prime number theorem, because $\Delta \le H^{\binom{k}{2}}$. Exponentiating and letting $H$ tend to infinity yields \eqref{4.5}.
\end{proof}

We now assume all of the hypotheses of Proposition \ref{P2.2}. The proof of \eqref{2.9} is almost identical to the proof of Lemma 1 of \cite{GMPY2006}, the only difference being that primes $p \mid Qp_0$ are excluded from the representation of $F(s_1,s_2;\Omega)$, where
\begin{align*}
F(s_1,s_2;\Omega) & := \Sum_{d_1,d_2} \mu(d_1)\mu(d_1) \frac{\abs{\od{[d_1,d_2]}}}{[d_1,d_2]d_1^{s_1}d_2^{s_2}} \\ & = 
\prod_{p \nmid Qp_0} \br{1 - \frac{\v}{p}\br{\frac{1}{p^{s_1}} + \frac{1}{p^{s_2}} - \frac{1}{p^{s_1+s_2}}}}
\end{align*}
in the region of absolute convergence. Since $\v = k$ for $p > H$ by \eqref{4.3}, we put
\[
G(s_1,s_2; \Omega) := F(s_1,s_2;\Omega)\br{\frac{\zeta(s_1+1)\zeta(s_2+1)}{\zeta(s_1+s_2+1)}}^k.
\]

In the proof of Lemma 1 of \cite{GMPY2006}, $G(0,0;\Omega) = \mathfrak{S}(\h)$, but in our situation, we have
\[
G(0,0;\Omega) = \prod_{p \nmid Qp_0} \br{1 - \frac{\v}{p}} \prod_{p} \br{1 - \frac{1}{p}}^{-k} = 
\mathfrak{S}(\h) \prod_{p \mid p_0} \br{1 - \frac{\v}{p}}^{-1},
\]
because $(Q,p_0) = 1$ and $\v = 0$ if $p \mid Q$. The last product is $\sim 1$ by \eqref{2.1}. Now applying \eqref{4.4}, and proceeding as in the proof of Lemma 1 of \cite{GMPY2006}, \eqref{2.9} is established.

The proof of \eqref{2.10} follows that of Lemma 2 of \cite{GMPY2006} very closely: there is one important difference concerning the error
\[
E^{*}(N,q) := \max_{x \le N} \max_{(a,q) = 1} 
\Bigg\vert \sums{p \le x}{p \equiv a \bmod q} \log p - \frac{x}{\phi(q)} \Bigg\vert.
\]
The usual Bombieri-Vinogradov theorem will not suffice here, but the next lemma, which is Lemma 2 of \cite{GPY2006}, will.
\begin{Lem}\label{L4.2}
Let $Q$ be an integer and $Y,M$ be numbers such that
\begin{align}\label{4.7}
Q^2 \le Y \le M, \quad \exp\br{2\sqrt{\log M}} \le Y.
\end{align}
If there is an exceptional modulus $q_0 \le Y$, suppose $p_0 \nmid Q$ for some $p_0 \mid q_0$; otherwise, let $p_0 = 1$. If
\begin{align}\label{4.8}
R^{*} := M^{1/2}Q^{-3}\exp\br{-\sqrt{\log M}},
\end{align}
then we have, with explicitly calculable positive constants $c_1$ and $c_2$,
\begin{align}\label{4.9}
\sums{D \le R^{*}}{(D,Qp_0) = 1}E^{*}(M,QD) \le c_1\frac{M}{Q}\exp\br{-\frac{c_2\log M}{\log Y}}.
\end{align}
\end{Lem}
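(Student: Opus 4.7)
The plan is to carry out the standard character-theoretic proof of Bombieri--Vinogradov, with careful bookkeeping to avoid the potential Siegel zero. First, I would use orthogonality of Dirichlet characters modulo $QD$ to write
\[
E^{*}(M,QD) \ll \frac{\log M}{\phi(QD)}\sum_{\chi \bmod QD,\, \chi \ne \chi_1}\max_{x \le M}\abs{\psi(x,\chi)},
\]
where $\chi_1$ is the principal character and $\psi(x,\chi) = \sum_{n \le x}\Lambda(n)\chi(n)$, and then reduce each $\chi$ to the primitive inducing character $\chi^{*}\bmod q^{*}$, $q^{*}\mid QD$. The key use of the hypothesis $(D,Qp_0)=1$ together with $p_0\nmid Q$ is that $p_0\nmid QD$, so $q_0\nmid QD$ (as $p_0\mid q_0$); consequently the exceptional character $\chi_0\bmod q_0$ never appears among the primitive inducers in this sum.

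Next, I swap the order of summation to range over primitive characters $\chi^{*}\bmod q^{*}$ with $q^{*}\le QR^{*}\le M^{1/2}\exp(-\sqrt{\log M})$, the last inequality following from \eqref{4.8}. I split the range of $q^{*}$ into the ``small'' range $q^{*}\le Y$ and the ``large'' range $Y<q^{*}\le QR^{*}$. In the small range, since $\chi^{*}\ne\chi_0$, Lemma \ref{L2.1} supplies the classical zero-free region $\mathrm{Re}(s)>1-c/\log(q^{*}(\abs{t}+2))$ for $L(s,\chi^{*})$, and the standard contour argument yields $\max_{x\le M}\abs{\psi(x,\chi^{*})}\ll M\exp(-c\sqrt{\log M})$ uniformly. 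The hypothesis $\exp(2\sqrt{\log M})\le Y$ implies $\sqrt{\log M}\ge 2\log M/\log Y$, so this saving already majorises the required $\exp(-c_2\log M/\log Y)$; summing over the $D\le R^{*}$ with $q^{*}\mid QD$ and dividing by $\phi(QD)$ keeps the cumulative contribution within the bound \eqref{4.9}.

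In the large range, I apply Gallagher's log-free zero-density estimate: the number of zeros of $L(s,\chi^{*})$ in $\mathrm{Re}(s)\ge 1-\sigma$, $\abs{\mathrm{Im}(s)}\le T$, summed over primitive $\chi^{*}\bmod q^{*}$ with $q^{*}\le Q_0$, is $\ll (Q_0^{2}T)^{C\sigma}$. Combined with the explicit formula for $\psi(x,\chi^{*})$ and dyadic decomposition, this converts into a bound of the shape $M\exp(-c\log M/\log(Q_0^{2}T))$ averaged against $1/\phi(q^{*})$. Taking $Q_0=QR^{*}$ and a fixed power $T=M^{O(1)}$, the hypothesis $Q^{2}\le Y$ ensures $\log(Q_0^{2}T)\ll\log Y$, so the saving is again of the form $\exp(-c_2\log M/\log Y)$. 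The prefactor $M/Q$ in \eqref{4.9} emerges from $1/\phi(QD)\ll 1/(Q\phi(D))$ together with $\sum_{D\le R^{*}}1/\phi(D)\ll\log M$, which is absorbed into $c_1$.

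I expect the main obstacle to be the large range: extracting a genuinely sub-polynomial saving $\exp(-c_2\log M/\log Y)$, rather than a mere power of $\log M$, across the full Bombieri--Vinogradov range $q^{*}\le M^{1/2-o(1)}$ fundamentally requires Gallagher's log-free density bound, since any density estimate with a $\log$-loss yields only the classical form of Bombieri--Vinogradov. The potential Siegel zero would spoil the argument in both ranges, and the sole purpose of the coprimality condition $(D,Qp_0)=1$, combined with $p_0\nmid Q$, is to sanitise the primitive-character sum of any contribution from the exceptional character $\chi_0$.
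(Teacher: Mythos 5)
First, a point of reference: the paper does not prove this statement at all --- it is imported verbatim as Lemma 2 of \cite{GPY2006}, so your sketch can only be compared with the standard Bombieri--Vinogradov machinery that underlies that lemma. Your high-level architecture (orthogonality, reduction to primitive characters, a small/large conductor split, Landau--Page to excise the exceptional character, a log-free density estimate) is the right one, and you correctly identify why $(D,Qp_0)=1$ together with $p_0\nmid Q$ removes $\chi_0$ from the sum. But the quantitative bookkeeping in both of your ranges is wrong in ways that break the argument. In the ``small'' range $q^*\le Y$: the zero-free region $1-c/\log(q^*(\abs{t}+2))$ does \emph{not} give $\psi(x,\chi^*)\ll M\exp(-c\sqrt{\log M})$ uniformly for $q^*$ up to $Y$, since the hypotheses allow $Y$ to be nearly a power of $M$ (in the paper's application $Y=M^{1/(\log\log M)^2}$); the correct individual bound is only $M\exp(-c\log M/\log Y)$. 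Worse, even with that bound, you cannot afford to estimate each character separately up to conductor $Y$: the total weight $\sum_{q^*\le Y}\phi(q^*)\sum_{D\le R^*,\,q^*\mid QD}1/\phi(QD)$ is of size roughly $Y^{1+o(1)}/Q\ge Y^{1/2}\ge\exp(\sqrt{\log M})$, while the per-character saving is at best $\exp(-c\sqrt{\log M}/2)$ (since $\log M/\log Y\le\tfrac12\sqrt{\log M}$), and the constant in the classical/Landau--Page region is far too small for the product to stay below $(M/Q)\exp(-c_2\log M/\log Y)$. Individual bounds are only admissible for conductors up to about $\exp(\epsilon\log M/\log Y)$ --- which is where the hypothesis $\exp(2\sqrt{\log M})\le Y$ is actually used, to guarantee this threshold sits below $Y$ so that Landau--Page supplies the zero-free region of width $\gg 1/\log Y$ there. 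Everything between that threshold and $Y$ must go through the density estimate or the large sieve.

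In the ``large'' range your key inequality is false: with $Q_0=QR^*\asymp M^{1/2}Q^{-2}\exp(-\sqrt{\log M})$ and $T$ a power of $M$, one has $\log(Q_0^2T)\asymp\log M$, not $\ll\log Y$ (the hypothesis $Q^2\le Y$ bounds $Q$, not $R^*$). Applied as you describe, Gallagher's estimate yields only $M\exp(-c\log M/\log M)=Me^{-c}$, a bounded saving. The genuine sources of the saving for large conductors are (i) the decay of the weights, $\sum_{D:\,q^*\mid QD}1/\phi(QD)\ll\log M/\bigl(\phi(Q)\phi(q^*/(q^*,Q))\bigr)$, fed into a dyadic decomposition of the conductor and the large-sieve/Vaughan bound, and (ii) the factor $\exp(-\sqrt{\log M})$ built into $R^*$, which via $\sqrt{\log M}\ge 2\log M/\log Y$ is what supplies the final $\exp(-c_2\log M/\log Y)$ and absorbs the powers of $\log M$ (these cannot simply be ``absorbed into $c_1$'', as $\exp(c\log M/\log Y)$ need not dominate $\log M$ under the stated hypotheses). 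Finally, your sketch is silent on possible Siegel zeros of conductors in $(Y,QR^*]$, which the hypothesis does not exclude; they are harmless only because such a conductor $q^*$ satisfies $q^*/(q^*,Q)>Y/Q\ge Y^{1/2}\ge\exp(\sqrt{\log M})$, so its weight is already smaller than the target bound. As written, the proposal would not assemble into a proof of \eqref{4.9}.
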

By \eqref{2.2} -- \eqref{2.5}, we see that \eqref{4.7} is satisfied with 
\[
Y = \exp\br{2cH/(\log H)^2} = N^{2c\epsilon(1+o(1))/(\log\log N)^2},
\]
and $M = 3QN$. We also have
\[
R^2 = N^{1/2 - 2\epsilon'} \le R^{*} = (3QN)^{1/2}Q^{-3}\exp\br{-\sqrt{\log 3QN}},
\]
for all sufficiently large $N$, and
\[
c_2\log M/\log Y = c_2(1+o(1))\log N/\log Y = c_2(1+o(1))(\log\log N)^2/2c\epsilon.
\]
Letting $c_3 = c_2/12c\epsilon$ and putting this into \eqref{4.9}, we deduce from Lemma \ref{L4.2} that
\begin{align}\label{4.10}
\Sum_{D \le R^2} E^{*}(3QN,QD) \ll N(\log N)^{-5c_3\log\log N}
\end{align}
for all sufficiently large $N$.

Now, abbreviating $\lambda_R(d;k+\ell)$ to $\lambda_d$, by \eqref{4.2} we have
\begin{align}\label{4.11}
\begin{split}
\sum_{N < n \le 2N}\vartheta(Qn + h)\Lambda_R(n;\h,k+\ell)^2 & =
\Sum_{d_1,d_2} \lambda_{d_1}\lambda_{d_2} \sums{N < n \le 2N}{[d_1,d_2] \mid P(n;\h)} \vartheta(Qn+h) \\ & = 
\Sum_{d_1,d_2} \lambda_{d_1}\lambda_{d_2} \sums{m \bmod [d_1,d_2]}{\in \od{[d_1,d_2]}}
\sumss{QN+h < p \le 2QN+h}{p \equiv h \bmod Q}{p \equiv Qm+h \bmod [d_1,d_2]} \log p.
\end{split}
\end{align}
We may assume $(Qm+h,[d_1,d_2]) = (Q,[d_1,d_2]) = 1$ in the last sum, so we define
\[
\os{d} := \od{d} \setminus \{m \bmod d : (Qm+h,d) \ne 1\}.
\]
For $d_1,d_2$ with $(Q,[d_1,d_2]) = 1$ and $m \bmod [d_1,d_2] \in \os{[d_1,d_2]}$, we let $h_m \bmod Q[d_1,d_2]$ be the unique congruence class mod $Q[d_1,d_2]$ satisfying $h_m \equiv h \bmod Q$ and $h_m \equiv Qm+h \bmod [d_1,d_2]$. Thus, the last sum in \eqref{4.11} is equal to
\[
\sums{QN+h < p \le 2QN+h}{p \equiv h_m \bmod Q[d_1,d_2]} \log p = 
\frac{2QN+h}{\phi(Q[d_1,d_2])} -  \frac{QN+h}{\phi(Q[d_1,d_2])} + O\br{E^{*}(3QN,Q[d_1,d_2])},
\]
and \eqref{4.11} becomes
\begin{align}\label{4.12}
\frac{QN}{\phi(Q)}\mathcal{T}^{*} + O(\mathcal{E}^{*}),
\end{align}
with
\[
\mathcal{T}^{*} := \Sum_{d_1,d_2} \frac{\lambda_{d_1}\lambda_{d_2} \vs{[d_1,d_2]}}{\phi([d_1,d_2])}, \quad 
\mathcal{E}^{*} := \Sum_{d_1,d_2} \abs{\lambda_{d_1}\lambda_{d_2}} \vs{[d_1,d_2]} E^{*}(3QN,Q[d_1,d_2]).
\]

Now from the definition \eqref{4.1} it is clear that $\abs{\lambda_d} \le (\log R)^{k+\ell}$. Also, as we saw in the beginning of the proof of Lemma \ref{L4.1}, since $(Q,h_1\cdots h_k) = 1$ we have $\v \le k$ for all $p$, and so $\vs{d} \le \vd{d} \le k^{\omega(d)}$ for squarefree $d$. Thus
\begin{align*}
\mathcal{E}^{*} & \le (\log R)^{2(k+\ell)} \Sum_{D \le R^2} \mu^2(D)k^{\omega(D)} E^{*}(3QN,QD) \sum_{[d_1,d_2] = D} 1 \\ & = 
(\log R)^{2(k+\ell)} \Sum_{D \le R^2} \mu^2(D)(3k)^{\omega(D)} E^{*}(3QN,QD). 
\end{align*}
By the trivial inequality
\[
E^{*}(3QN,QD) \ll \frac{QN\log QN}{QD} \ll \frac{N\log N}{D},
\]
and the Cauchy-Schwarz inequality, we have
\begin{multline*}
\Sum_{D \le R^2} \mu^2(D)(3k)^{\omega(D)} E^{*}(3QN,QD) \\ \ll
\br{N\log N \sum_{D \le R^2} \frac{\mu^2(D)(3k)^{2\omega(D)}}{D}}^{1/2}\br{\Sum_{D \le R^2}E^{*}(3QN,QD)}^{1/2}.
\end{multline*}
For positive integers $\kappa$, we have
\[
\sum_{D \le R^2} \frac{\mu^2(D)\kappa^{\omega(D)}}{D} = 
\sum_{d\cdots d_{\kappa} \le R^2} \frac{\mu^2(d_1)\cdots \mu^2(d_{\kappa})}{d_1\cdots d_{\kappa}} 
\ll (\log R^2)^{\kappa} \ll (\log N)^{\kappa},
\]
so combining and applying \eqref{4.10} yields
\begin{align}\label{4.13}
\mathcal{E}^{*} \ll 
N\frac{(\log N)^{2(k+\ell) + (3k)^2/2 + 1/2}}{(\log N)^{-2c_3\log\log N}} 
\le N(\log N)^{-c_3\log\log N}.
\end{align}

We will now evaluate $\mathcal{T}^{*}$, assuming first that $Qx+h \not\in \h$. Let $\hd = \h \cup \{Qx+h\}$ and observe that for $p \nmid Q$,
\[
\vs{p} = |\O{p}{\hd}| - 1 := |\op{p}| - 1.
\]
As with $\vd{d}$, a Chinese remainder theorem argument shows that $\vs{d}$ defines a multiplicative function of $d$. Thus
\[
\vs{[d_1,d_2]} = \prod_{p \mid [d_1,d_2]} \br{|\op{p}| - 1},
\]
provided $[d_1,d_2]$ is squarefree and $(Q,[d_1,d_2]) = 1$, as is the case for $d_1,d_2$ appearing in the sum defining $\mathcal{T}^{*}$. 

We now proceed as in the proof of Lemma 2 of \cite{GMPY2006}: again, the only modification necessary is to $G(0,0;\Omega^{+})$. First note that
\begin{align*}
\mathfrak{S}(\hd)  = \prod_{p} \br{\frac{p - |\op{p}|}{p}}\br{\frac{p}{p-1}}\br{1 - \frac{1}{p}}^{-k} = 
\prod_{p} \br{1 - \frac{|\op{p}| - 1}{p-1}}\br{1 - \frac{1}{p}}^{-k}.
\end{align*}
By \eqref{4.3}, $|\op{p}| = |\hd| = k+1$ for $p > H$, and if
\[
G(s_1,s_2;\Omega^{+}) := 
\prod_{p \nmid Qp_0} \br{1 - \frac{|\op{p}| - 1}{p-1}\br{\frac{1}{p^{s_1}} + \frac{1}{p^{s_1}} - \frac{1}{p^{s_1+s_2}}}}\cdot
\br{\frac{\zeta(s_1+1)\zeta(s_2+1)}{\zeta(s_1+s_2+1)}}^k,
\]
then
\begin{align*}
G(0,0;\Omega^{+}) & = \prod_{p \nmid Qp_0} \br{1 - \frac{|\op{p}| - 1}{p-1}} \prod_{p} \br{1 - \frac{1}{p}}^{-k} \\ & = 
 \mathfrak{S}(\hd) \prod_{p \mid Q} \br{1 + \frac{1}{p-1}}^{-1} \prod_{p \mid p_0} \br{1 - \frac{|\op{p}| - 1}{p-1}}^{-1} \\ & \sim
 \br{\frac{Q}{\phi(Q)}}^k,
\end{align*}
by Lemma \ref{L4.1} and \eqref{2.1}. Therefore
\begin{align}\label{4.14}
\mathcal{T}^{*} \sim \br{\frac{Q}{\phi(Q)}}^k\binom{2\ell}{\ell}\frac{(\log R)^{k+2\ell}}{(k+2\ell)!}.
\end{align}

We remark that since $(Q,h) = (Q,h_1\cdots h_k) = 1$, $\hd$ is admissible (for all sufficiently large $N$) by Lemma \ref{L4.1}, so we do not have to consider the other case as in the proof of Lemma 2 in \cite{GMPY2006}. Combining \eqref{4.14} with \eqref{4.13} and \eqref{4.12} yields the first case of \eqref{2.10}. For the case $Qx+h \in \h$, we observe that, similarly to (2.2) of \cite{GMPY2006}, we have
\[
\sum_{N < n \le 2N}\vartheta(Qn+h)\Lambda_R(n;\h,k+\ell)^2 = \sum_{N < n \le 2N}\vartheta(Qn+h)\Lambda_R(n;\h \setminus \{Qx+h\},k+\ell)^2,
\]
so the above evaluation applies with the translation $k \mapsto k-1$, $\ell \mapsto \ell+1$ to \eqref{4.14}.

\section{Proof of Proposition 2.3}\label{Section 5}

\subsection{Auxiliary lemmas}\label{Subsection 5.1}

To prove Proposition \ref{P2.3}, we will use the following lemmas.
\begin{Lem}\label{L5.1}
Fix integers $q$ and $a$ with $(q,a) = 1$. There is a constant $c(q,a) > 0$, depending only on $q$ and $a$, such that
\[
\prods{p \le x}{p \equiv a \bmod q}\br{1 - \frac{1}{p}} \sim \frac{c(q,a)}{(\log x)^{1/\phi(q)}}
\]
as $x \to \infty$.
\end{Lem}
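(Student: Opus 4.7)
The plan is to take logarithms and reduce the statement to a Mertens-type asymptotic for the sum of $1/p$ over primes $p \equiv a \bmod q$. Expanding $\log(1-1/p)$ as a Taylor series gives
\[
\log \prod_{\substack{p \le x \\ p \equiv a \bmod q}}(1 - 1/p) = -\sum_{\substack{p \le x \\ p \equiv a \bmod q}}\frac{1}{p} \;-\; \sum_{\substack{p \le x \\ p \equiv a \bmod q}}\sum_{k \ge 2}\frac{1}{kp^k},
\]
and the double sum is bounded in absolute value by $\sum_{p}1/p^2 < \infty$, so as $x \to \infty$ it converges to a finite constant $C_1(q,a)$.

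Next I would evaluate the main sum by orthogonality of the Dirichlet characters modulo $q$, writing
\[
\sum_{\substack{p \le x \\ p \equiv a \bmod q}}\frac{1}{p} = \frac{1}{\phi(q)}\sum_{\chi \bmod q} \overline{\chi}(a) \sum_{p \le x} \frac{\chi(p)}{p}.
\]
For the principal character $\chi_0$, the classical Mertens theorem gives $\sum_{p \le x,\, p \nmid q}1/p = \log\log x + B - \sum_{p \mid q}1/p + o(1)$. For each non-principal $\chi$, the series $\sum_{p}\chi(p)/p$ converges to a finite limit; this is standard and follows from the non-vanishing $L(1,\chi) \ne 0$ combined with the identity $\log L(s,\chi) = \sum_p \chi(p)/p + \sum_{p}\sum_{k \ge 2}\chi(p^k)/(kp^{ks})$ and a partial summation argument as $s \to 1^+$. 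Putting the two cases together, I obtain
\[
\sum_{\substack{p \le x \\ p \equiv a \bmod q}}\frac{1}{p} = \frac{\log\log x}{\phi(q)} + C_2(q,a) + o(1)
\]
for some real constant $C_2(q,a)$.

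Substituting this back into the logarithmic expansion and exponentiating yields the claimed asymptotic with $c(q,a) = \exp\bigl(C_1(q,a) - C_2(q,a)\bigr)$, which is manifestly positive. The only nontrivial input is the convergence of $\sum_p \chi(p)/p$ for non-principal characters, resting ultimately on Dirichlet's theorem $L(1,\chi) \ne 0$; this is classical and can be quoted from a standard reference such as Davenport's \emph{Multiplicative Number Theory}. The rest is bookkeeping, and the argument is short enough that one might simply cite this as a known generalization of Mertens' theorem to arithmetic progressions.
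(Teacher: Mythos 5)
Your argument is correct, and it is the standard proof of Mertens' theorem for arithmetic progressions: take logarithms, isolate $\sum_{p \le x,\, p \equiv a \bmod q} 1/p$, split by orthogonality of characters mod $q$, use classical Mertens for the principal character and the convergence of $\sum_p \chi(p)/p$ (resting on $L(1,\chi) \ne 0$) for the non-principal ones. The paper does not actually prove the lemma at all --- it simply remarks that it follows from the prime number theorem for arithmetic progressions and cites Williams \cite{W1974} for an explicit form of the constant --- so you have supplied the derivation that underlies the citation rather than taken a different route. Two small points of bookkeeping: with your definition of $C_1(q,a)$ as the (positive) limit of the double sum $\sum_{p \equiv a}\sum_{k \ge 2} 1/(kp^k)$, the constant should come out as $c(q,a) = \exp\bigl(-C_1(q,a) - C_2(q,a)\bigr)$ rather than $\exp\bigl(C_1(q,a) - C_2(q,a)\bigr)$ (harmless, since positivity is all that matters); and the convergence of $\sum_p \chi(p)/p$ at $s=1$ needs slightly more than letting $s \to 1^+$ in the Euler product identity (one passes through $\sum_{n \le x} \chi(n)\Lambda(n)/n = O(1)$ or an equivalent Tauberian step), but this is exactly the classical material in Davenport that you propose to quote, so it is not a gap.
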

\begin{proof}
This follows from the prime number theorem for arithmetic progressions. For a more precise estimate, with the constant $c(q,a)$ given explicitly, see \cite[Theorem 1]{W1974}. 
\end{proof}
\begin{Lem}\label{L5.2}
Let $\mathscr{S}(x)$ denote the set of positive integers which are $\le x$ and composed only of primes $p \equiv 1 \bmod q$. There is a constant $c(q) > 0$, depending only on $q$, such that 
\[
\abs{\mathscr{S}(x)} = \br{c(q) + O\br{\frac{1}{\log x}}}\frac{x}{\log x} (\log x)^{1/\phi(q)}.
\]
\end{Lem}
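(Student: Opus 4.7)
The plan is to apply the Selberg-Delange method. Let $f$ denote the indicator function of the set of positive integers composed only of primes $p \equiv 1 \bmod q$, so that $\abs{\mathscr{S}(x)} = \sum_{n \le x} f(n)$. This $f$ is multiplicative, with Dirichlet series the Euler product
\[
F(s) := \sum_{n} f(n) n^{-s} = \prods{p}{p \equiv 1 \bmod q}\br{1 - p^{-s}}^{-1}, \qquad \mathrm{Re}(s) > 1.
\]
First I would show that $F(s) = \zeta(s)^{1/\phi(q)} G(s)$, with $G$ holomorphic and nonvanishing in a standard zero-free region, then feed this into the Selberg-Delange theorem.

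To extract the singularity at $s = 1$, I would use orthogonality of Dirichlet characters mod $q$ to write, for $\mathrm{Re}(s) > 1$,
\[
\log F(s) = \frac{1}{\phi(q)} \sum_{\chi \bmod q} \log L(s,\chi) + B(s),
\]
where $B(s)$ is holomorphic on $\mathrm{Re}(s) > 1/2$ (it absorbs the contribution of the $m \ge 2$ terms in the logarithmic Euler product and of the primes $p \mid q$). Since $L(s,\chi_0) = \zeta(s) \prod_{p \mid q}(1 - p^{-s})$ and, for $\chi \ne \chi_0$, $L(s,\chi)$ is holomorphic and nonzero in a neighborhood of the line $\mathrm{Re}(s) = 1$ by the classical zero-free region for fixed modulus $q$, this yields the desired factorization, with $G(1) > 0$.

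Applying the Selberg-Delange theorem to $F$ then delivers
\[
\abs{\mathscr{S}(x)} = \frac{G(1) \, x}{\Gamma(1/\phi(q)) \, (\log x)^{1 - 1/\phi(q)}} \br{1 + O\br{\frac{1}{\log x}}},
\]
and the stated asymptotic follows with $c(q) := G(1)/\Gamma(1/\phi(q)) > 0$. The main technical point would be the verification of the analyticity and polynomial growth of $G$ needed to shift the Perron contour past $\mathrm{Re}(s) = 1$, but for a fixed modulus $q$ this is entirely routine and involves none of the Siegel-zero complications treated in Section \ref{Section 4}. I should also note that a weaker asymptotic without the explicit $O(1/\log x)$ error term follows more directly from Wirsing's mean-value theorem for nonnegative multiplicative functions combined with Lemma \ref{L5.1} (which controls the product $\prod_{p \le x,\, p \equiv 1 (q)}(1 - 1/p)^{-1}$), and this weaker statement may already suffice for the application in Proposition \ref{P2.3}.
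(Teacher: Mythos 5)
Your proposal is correct: the Selberg--Delange factorization $F(s)=\zeta(s)^{1/\phi(q)}G(s)$ via $\log F(s)=\phi(q)^{-1}\sum_{\chi}\log L(s,\chi)+B(s)$ is the standard route to this asymptotic, and it delivers exactly the stated main term and $O(1/\log x)$ relative error. The paper itself offers no argument here, deferring entirely to Lemma 3 of \cite{S2000}, which rests on the same machinery, so you have simply supplied the details the paper outsources; your closing observation that only the lower bound $\abs{\mathscr{S}(x)}\gg_q x(\log x)^{1/\phi(q)-1}$ is needed in \eqref{5.24} is also accurate.
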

\begin{proof}
See \cite[Lemma 3]{S2000}, in which the constant $c(q)$ is given explicitly.
\end{proof}

The next lemma concerns $\Psi(x,y)$, the number of positive integers which are $\le x$ and free of prime factors $> y$ ($y$-smooth numbers). The ratio $\Psi(x,y)/x$ depends essentially on $u = \log x/\log y$, and for $u$ in a certain range is approximated by $\rho(u)$, where $\rho(u)$ is the Dickman-de Bruijn $\rho$-function, defined as the continuous solution to
\begin{align}\label{5.1}
\begin{split}
\rho(u) & := 
\begin{cases}
1 & \textrm{$0 \le u \le 1$,} \\
\frac{1}{u}\int_{u-1}^u \rho(t) \, dt & \textrm{$u > 1$.}
\end{cases}
\end{split}
\end{align}
\begin{Lem}\label{L5.3}
The estimate
\begin{align}\label{5.2}
\frac{\Psi(y^u,y)}{y^u} = \rho(u)\br{1 + O\br{\frac{\log (u+2)}{\log y}}}
\end{align}
holds uniformly in the range 
\begin{align}\label{5.3}
y \ge 3, \quad 1 \le u \le \exp\br{(\log y)^{3/5 - \delta}},
\end{align}
where $\delta$ is any fixed positive number. The estimate
\begin{align}\label{5.4}
\rho(u) = \exp\br{-u\log u - u\log\log u + O(u)}
\end{align}
holds for $u > 3$, and
\begin{align}\label{5.5}
\frac{\Psi(y^u,y)}{y^u} = \exp\br{-u\log u - u\log\log u + O(u)}
\end{align}
holds uniformly in the range
\begin{align}\label{5.6}
3 < u \le y^{1 - \delta}.
\end{align}
Finally, as $y \to \infty$, 
\begin{align}\label{5.7}
\frac{\Psi(y,(\log y)^A)}{y} = \frac{1}{y^{1/A + o(1)}}
\end{align}
holds for any fixed number $A > 1$.
\end{Lem}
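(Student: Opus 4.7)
\emph{Proof plan.} Each of the four estimates is a standard result in the analytic theory of smooth numbers, and my plan is to quote them from the literature rather than reprove them here. The canonical reference is Tenenbaum's \emph{Introduction to Analytic and Probabilistic Number Theory}, Chapter III.5, which contains all four as theorems or immediate corollaries with complete proofs.

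The asymptotic \eqref{5.2} in the range \eqref{5.3} is Hildebrand's 1986 theorem, which sharpens earlier work of de Bruijn by extending the validity of the relation $\Psi(x,y) \sim x\rho(u)$, with the error term $O(\log(u+2)/\log y)$, up to $u \le \exp((\log y)^{3/5-\delta})$. The underlying argument proceeds via a functional equation for $\Psi$, a bootstrap that transfers estimates from small $u$ to large $u$, and prime number theorem estimates in known zero-free regions of $\zeta$. I would cite this rather than reproduce it.

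The estimate \eqref{5.4} for $\rho(u)$ is de Bruijn's classical asymptotic. One natural approach is to iterate the integral equation \eqref{5.1} to get crude bounds $\log \rho(u) \asymp -u\log u$, and then refine via saddle-point analysis of the Laplace transform of $\rho$; the result then follows from the known expansion of the saddle $\xi(u)$ defined implicitly by $e^\xi = 1 + u\xi$. The wider-range estimate \eqref{5.5} in \eqref{5.6} is obtained by combining \eqref{5.2} with \eqref{5.4} in the overlap of their ranges, and extending via the Hildebrand--Tenenbaum saddle-point method for larger $u$; only the rough form \eqref{5.5}, not the sharper \eqref{5.2}, is needed throughout \eqref{5.6}.

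Finally, \eqref{5.7} is a direct corollary of \eqref{5.5}. Setting $y_0 := (\log y)^A$ gives $u = \log y / \log y_0 = \log y / (A\log\log y)$, which satisfies $u \le y_0^{1-\delta}$ for any fixed $\delta < 1 - 1/A$ and $y$ sufficiently large. A brief calculation shows $u\log u = (\log y)/A + O(\log y \log\log\log y / \log\log y)$, while $u\log\log u$ and $u$ are both $o(\log y)$; substituting into \eqref{5.5} yields $\log(\Psi(y,y_0)/y) = -(1/A + o(1))\log y$, which is exactly \eqref{5.7}. The real obstacle, were one to prove these from scratch, would be \eqref{5.2} with its sharp error term and wide range, as this requires the full Hildebrand--Tenenbaum saddle-point machinery; everything else can be extracted from that estimate and the integral equation \eqref{5.1} by relatively routine work.
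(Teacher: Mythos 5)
Your proposal is correct and takes essentially the same approach as the paper: both treat the lemma as a compendium of known results on smooth numbers and the Dickman--de Bruijn function, to be quoted from the literature (the paper cites Granville's survey together with Hildebrand and de Bruijn, where you cite Tenenbaum), with \eqref{5.5} obtained by combining Hildebrand's wide-range estimate with \eqref{5.4}. The only difference is cosmetic: you derive \eqref{5.7} from \eqref{5.5} by a short (and correct) computation with $u=\log y/(A\log\log y)$, whereas the paper simply cites it.
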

\begin{proof}
We refer to the survey article of Granville \cite{G2008}. The asymptotic \eqref{5.2} was shown to hold for the range \eqref{5.3} by Hildebrand \cite{H1986}: see \cite[(1.8), (1.10)]{G2008}. Hildebrand \cite{H1986} also established that the less precise estimate
\[
\frac{\Psi(y^u,y)}{y^u} = \rho(u)\exp\br{O_{\delta}\br{u\exp\br{-(\log u)^{3/5-\delta}}}}
\]
holds, for any fixed number $\delta > 0$, in the wider range \eqref{5.6}. (See displayed formulas \cite[(1.11), (1.13)]{G2008}.) That \eqref{5.5} holds in the same range can be deduced from \eqref{5.4}. (The estimate \eqref{5.5} is less precise, but sufficient for our purposes.) For the estimate \eqref{5.7}, see \cite[(1.14)]{G2008}. 

The value of the Dickman-de Bruijn $\rho$-function is discussed in \cite[3.7 -- 3.9]{G2008}, and \eqref{5.4} was proved by de Bruijn in \cite{B1951b}. 
\end{proof}

\begin{Lem}\label{L5.4}
Let $\mathscr{P}$ be a subset of the primes. As $y \to \infty$, the estimate
\begin{align}\label{5.8}
\prods{p \le y}{p \in \mathscr{P}}\br{1 - \frac{1}{p}}\sumss{n > y^u}{p \mid n \Rightarrow p \le y}{p \in \mathscr{P}} \frac{1}{n} \le (1+o(1))e^{-\gamma} \int_u^{\infty} \rho(v) \,dv. 
\end{align}
holds uniformly for $u$ satisfying
\begin{align}\label{5.9}
u \ge 1, \quad u = \exp\br{(\log y)^{3/5 - \delta}}, 
\end{align}
where $\delta$ is any fixed positive number.
\end{Lem}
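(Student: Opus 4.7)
The plan is to reduce Lemma \ref{L5.4} to the classical (unrestricted) smooth-number case via a multiplicative factorization. The key observation is that every $y$-smooth positive integer $m$ factors uniquely as $m = n_1 n_2$, with $n_1$ composed only of primes in $\mathscr{P} \cap [1,y]$ and $n_2$ composed only of primes in $[1,y] \setminus \mathscr{P}$. Writing $B$ for the $\mathscr{P}$-smooth set and $B^c$ for the set of admissible $n_2$, so that the left side of \eqref{5.8} is $\prod_{p \le y,\, p \in \mathscr{P}}(1-1/p)\sum_{n \in B,\, n > y^u} 1/n$, I would first observe that $n_1 > y^u$ forces $n_1 n_2 > y^u$; combined with uniqueness of the factorization this gives
\[
\Bigl(\sums{n_1 \in B}{n_1 > y^u}\frac{1}{n_1}\Bigr)\Bigl(\sum_{n_2 \in B^c}\frac{1}{n_2}\Bigr) \le \sums{m > y^u}{p \mid m \Rightarrow p \le y}\frac{1}{m}.
\]
Since $\sum_{n_2 \in B^c} 1/n_2 = \prod_{p \le y,\, p \notin \mathscr{P}}(1-1/p)^{-1}$, multiplying through by $\prod_{p \le y,\, p \in \mathscr{P}}(1-1/p)$ collapses the two Mertens-type products into $\prod_{p \le y}(1-1/p)$, and reduces \eqref{5.8} to the classical bound
\[
\prod_{p \le y}(1-1/p)\sums{m > y^u}{p \mid m \Rightarrow p \le y}\frac{1}{m} \le (1 + o(1))\, e^{-\gamma}\int_u^\infty \rho(v)\,dv.
\]

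To establish this classical bound, I would use the identity $\sum_{p \mid m \Rightarrow p \le y} 1/m = \prod_{p \le y}(1-1/p)^{-1}$ to rewrite its left side as $1 - \prod_{p \le y}(1-1/p)\sum_{m \le y^u,\, p\mid m \Rightarrow p \le y} 1/m$. Partial summation with the substitution $t = y^v$ converts this complementary sum into $(\log y)\int_0^u \Psi(y^v,y)/y^v\,dv + O(1)$. Applying Lemma \ref{L5.3}, which gives $\Psi(y^v,y)/y^v \sim \rho(v)$ uniformly for $v \in [1,u]$ whenever $u$ lies in range \eqref{5.9}, while the contribution from $v \in [0,1)$ is trivial since both quantities are $1+o(1)$ there, yields
\[
\sums{m \le y^u}{p \mid m \Rightarrow p \le y}\frac{1}{m} \sim (\log y) \int_0^u \rho(v)\,dv.
\]
Combined with Mertens' theorem $\prod_{p \le y}(1-1/p) \sim e^{-\gamma}/\log y$, and the classical identity $\int_0^\infty \rho(v)\,dv = e^\gamma$ (see e.g.\ de Bruijn), this converts $1 - e^{-\gamma}\int_0^u \rho(v)\,dv$ into $e^{-\gamma}\int_u^\infty \rho(v)\,dv$, completing the argument.

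The creative step is the multiplicative factorization in the first paragraph, which decouples the dependence on $\mathscr{P}$ and allows the direct appeal to standard smooth-number asymptotics. The remaining calculation is routine; the only mild subtlety is ensuring the partial-summation error is uniform in $u$ across the allowed range \eqref{5.9}, but this is precisely what the uniformity built into Lemma \ref{L5.3} was designed to provide.
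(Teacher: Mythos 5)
Your first step is correct and is a genuinely cleaner route to the paper's key reduction, inequality \eqref{5.10}: the paper gets there by adjoining primes to $\mathscr{P}$ one at a time and comparing $\varrho(x,y;\mathscr{P})$ with $\varrho(x,y;\mathscr{P}\cup\{\ell\})$, whereas your unique factorization $m = n_1n_2$ into the $\mathscr{P}$-part and the complementary part gives the same inequality in one stroke. Both arguments are valid; yours decouples the dependence on $\mathscr{P}$ more transparently.

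The second step, however, has a genuine gap against the lemma as stated. You compute the \emph{head} $\sum_{m \le y^u} 1/m \sim (\log y)\int_0^u \rho(v)\,dv$ and subtract from $1$. The errors you incur there --- the $O(1)$ boundary term $\Psi(y^u,y)/y^u$ in partial summation, the integrated error $O\bigl(\int_0^u \rho(v)\log(v+2)\,dv\bigr) = O(1)$ from \eqref{5.2}, and the relative error $O(1/\log y)$ in Mertens' theorem --- are all \emph{additive} after normalization, of size $\asymp 1/\log y$. So your conclusion is
\begin{align*}
\prod_{p \le y}\br{1 - \frac{1}{p}}\sums{m > y^u}{p \mid m \Rightarrow p \le y}\frac{1}{m} \le e^{-\gamma}\int_u^{\infty}\rho(v)\,dv + O\br{\frac{1}{\log y}},
\end{align*}
which is weaker than the claimed multiplicative bound $(1+o(1))e^{-\gamma}\int_u^\infty\rho(v)\,dv$ as soon as $u$ grows: by \eqref{5.4} one has $\int_u^\infty \rho(v)\,dv \approx u^{-u}$, which is far smaller than $1/\log y$ once $u \gg \log\log y/\log\log\log y$, while \eqref{5.9} allows $u$ as large as $\exp((\log y)^{3/5-\delta})$. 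The paper avoids this by estimating the \emph{tail} directly: $\sum_{m>x} 1/m \le \int_x^\infty \Psi(t,y)t^{-2}\,dt$, split at $u_1 = 2\exp((\log y)^{3/5-\delta})$ and at $\exp(y^\epsilon)$, with every error term shown to be $o\bigl(\int_u^\infty\rho(v)\,dv\bigr)$ via the lower bound $\int_u^\infty\rho(v)\,dv \ge (u+1)\rho(u+1)$ and the superexponential decay of $\rho$. Your version does suffice for bounded $u$ (which is all the paper uses in \eqref{5.30}, where $u = A$ is fixed), but it does not prove the stated uniform range; to repair it you should bound the tail directly rather than pass through the complement.
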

\begin{proof}
Define
\[
\varrho(x,y;\mathscr{P}) := \prods{p \le y}{p \in \mathscr{P}}\br{1 - \frac{1}{p}} \sumss{n \le x}{p \mid n \Rightarrow p \le y}{p \in \mathscr{P}} \frac{1}{n}.
\]
If $\ell \le y$ is prime, then
\[
\varrho(x,y;\mathscr{P}) = \prods{p \le y}{p \in \mathscr{P} \cup \{\ell\}}\br{1 - \frac{1}{p}} \cdot \br{1 - \frac{1}{\ell}}^{-1} 
\sumss{n \le x}{p \mid n \Rightarrow p \le y}{p \in \mathscr{P}} \frac{1}{n}.
\]
Now
\[
\br{1 - \frac{1}{\ell}}^{-1}\sumss{n \le x}{p \mid n \Rightarrow p \le y}{p \in \mathscr{P}} \frac{1}{n} =
\br{1 + \frac{1}{\ell} + \frac{1}{\ell^2} + \cdots} 
\sumss{n \le x}{p \mid n \Rightarrow p \le y}{p \in \mathscr{P}} \frac{1}{n} \ge 
\sumss{m \le x}{p \mid m \Rightarrow p \le y}{p \in \mathscr{P} \cup \{\ell\}} \frac{1}{m},
\]
because every $m$ appearing in the last sum may be written as $n\ell^{\alpha}$ for some $\alpha \ge 0$ and some $n$ appearing in the second last sum. Hence, 
\[
\varrho(x,y;\mathscr{P}) \ge \varrho(x,y;\mathscr{P} \cup \{\ell\}),
\]
and applying this inequality repeatedly, we obtain
\[
\varrho(x,y;\mathscr{P}) \ge \prod_{p \le y} \br{1 - \frac{1}{p}} \sums{n \le x}{p \mid n \Rightarrow p \le y} \frac{1}{n}.
\]
Subtracting both sides from $\varrho(\infty,y;\mathscr{P}) = 1 = \varrho(\infty,y;\{p \le y\})$, we deduce that
\begin{align}\label{5.10}
\prods{p \le y}{p \in \mathscr{P}} \br{1 - \frac{1}{p}} \sumss{n > x}{p \mid n \Rightarrow p \le y}{p \in \mathscr{P}} \frac{1}{n} \le
\prod_{p \le y} \br{1 - \frac{1}{p}} \sums{n > x}{p \mid n \Rightarrow p \le y} \frac{1}{n}.
\end{align}
By partial summation, 
\begin{align}\label{5.11}
 \sums{n > x}{p \mid n \Rightarrow p \le y} \frac{1}{n} = \int_x^{\infty} \frac{d\Psi(t,y)}{t} = 
- \frac{\Psi(x,y)}{x} + \int_x^{\infty} \frac{\Psi(t,y)}{t^2} \, dt \le \int_x^{\infty} \frac{\Psi(t,y)}{t^2} \, dt.
\end{align}

Now we assume $x = y^u$, with $u$ satisfying \eqref{5.9} and $y$ tending to infinity. We will divide the range of the last integral in \eqref{5.11} into three parts. First of all, fix any $\epsilon \in (0,1)$ and suppose $t \ge \exp(y^{\epsilon})$, that is $y \le (\log t)^{1/\epsilon}$. By \eqref{5.7} we have
\[
\frac{\Psi(t,y)}{t^2} \le \frac{\Psi(t,(\log t)^{1/\epsilon})}{t^2} = \frac{1}{t^{1+\epsilon + o(1)}}
\]
as $t$, and hence as $y$, tends to infinity. Thus, we may suppose $y$ is large enough so that $\Psi(t,y)/t^2 \le 1/t^{1 + \epsilon/2}$, say, and
\begin{align}\label{5.12}
\int_{\exp(y^{\epsilon})}^{\infty} \frac{\Psi(t,y)}{t^2} \, dt \le \int_{\exp(y^{\epsilon})}^{\infty} \frac{dt}{t^{1 + \epsilon/2}} = \frac{2}{\epsilon\exp\br{\epsilon y^{\epsilon}/2}}.
\end{align}

For the range $x \le t \le \exp(y^{\epsilon})$, the substitution $t = y^v$ yields
\begin{align}\label{5.13}
\int_{x}^{\exp(y^{\epsilon})} \frac{\Psi(t,y)}{t^2} \, dt & = \log y \int_{u}^{y^{\epsilon}/\log y} \frac{\Psi(y^v,y)}{y^v} \, dv.
\end{align}
Next, we let $u_1 = 2\exp\br{(\log y)^{3/5 - \delta}}$, and for $u_1 \le v \le y^{\epsilon}$, we use the estimate \eqref{5.5}:
\[
\frac{\Psi(y^v,y)}{y^v} = \exp\br{-v\log v - v\log\log v + O(v)} \le \frac{1}{v^v},
\]
where the last inequality holds for all sufficiently large $v$, hence for all sufficiently large $y$. Thus
\begin{align}\label{5.14}
\int_{u_1}^{y^{\epsilon}/\log y} \frac{\Psi(y^v,y)}{y^v} \, dv \le \int_{u_1}^{\infty} \frac{dv}{v^v} \ll \frac{1}{u_1^{u_1}}
\end{align}
for all sufficiently large $y$.

For $u \le v \le u_1$, we use the estimate \eqref{5.2}:
\begin{align}\label{5.15}
\begin{split}
\int_{u}^{u_1} \frac{\Psi(y^v,y)}{y^v} \, dv & = \int_u^{u_1} \rho(v)\br{1 + O\br{\frac{\log (v+2)}{\log y}}} \, dv \\ & = 
(1+o(1)) \int_u^{\infty} \rho(v) \, dv - (1+o(1))\int_{u_1}^{\infty} \rho(v) \, dv.
\end{split}
\end{align}
By \eqref{5.4} we have, similarly to \eqref{5.14}, the estimate
\begin{align}\label{5.16}
\int_{u_1}^{\infty} \rho(v) \, dv \le \int_{u_1}^{\infty} \frac{dv}{v^v} \ll \frac{1}{u_1^{u_1}}
\end{align}
for all sufficiently large $y$.

Combining \eqref{5.11} -- \eqref{5.16}, we see that
\begin{align}\label{5.17}
\int_x^{\infty} \frac{\Psi(t,y)}{t^2} \, dt = 
(1+o(1))\log y\int_u^{\infty} \rho(v) \, dv + O\br{u_1^{-u_1}\log y}
\end{align}
for all sufficiently large $y$. Now by definition \eqref{5.1},
\[
\int_u^{\infty} \rho(v) \, dv \ge \int_{u}^{u+1} \rho(v) \, dv = (u+1)\rho(u+1),
\]
and by \eqref{5.4}, $u_1^{-u_1} = o((u+1)\rho(u+1))$ as $u_1 \ge 2u$, and $u_1$ tends to infinity with $y$. Therefore, combining \eqref{5.17} with \eqref{5.11} in fact gives
\begin{align}\label{5.18}
\sums{n > y^u}{p \mid n \Rightarrow p \le y} \frac{1}{n} \le (1+o(1))\log y \int_u^{\infty} \rho(v) \, dv
\end{align}
as $y \to \infty$, for $u$ in the range \eqref{5.9}. Finally, combining \eqref{5.18} with \eqref{5.10} and applying Mertens' theorem, we obtain \eqref{5.8}. 
\end{proof}

\subsection{The proof of Proposition 2.3}\label{Subsection 5.2}

We are now ready to define $Q$ explicitly. The construction is modelled on that of Shiu's \cite{S2000}. For the rest of this section we let $q \ge 3$ and $a$ be integers with $(q,a) = 1$. If $a \equiv 1 \bmod q$, let
\[
\mathscr{P}(H) := 
\{p \le \log H : p \equiv 1 \bmod q\} \cup \{p \le H/(\log H)^2 : p \not\equiv 1 \bmod q \},
\]
otherwise let
\begin{align*}
\mathscr{P}(H) := & \,
\{p \le \log H : p \equiv 1 \bmod q\} \cup \{p \le H/(\log H)^2 : p \not\equiv 1,a \bmod q\} \\ & \cup
\{t(H) \le p \le H/(\log H)^2 : p \equiv 1 \bmod q\} \cup
\{p \le H/t(H) : p \equiv a \bmod q\},
\end{align*}
with
\[
t(H) := \exp\br{\frac{\log H\log\log\log H}{2\log\log H}},
\]
and put
\begin{align}\label{5.19}
\tilde{Q}(H) := q\prod_{p \in \mathscr{P}(H)} p, \quad Q = Q(H) := q\prods{p \in \mathscr{P}(H)}{p \ne p_0} p. 
\end{align}
We check that \eqref{2.2} -- \eqref{2.5} are indeed satisfied by $Q$: only \eqref{2.4} is not immediate, but it follows from the prime number theorem. 

Analogously to \eqref{2.12}, we define
\begin{align}\label{5.20}
\begin{split}
\tilde{S}(H) & := \{h \in (0,H] : \textrm{$(\tilde{Q}(H),h) = 1$ and $h \equiv a \bmod q$}\}, \\
\tilde{T}(H) & := \{h \in (0,H] : \textrm{$(\tilde{Q}(H),h) = 1$ and $h \not\equiv a \bmod q$}\}.
\end{split}
\end{align}
Proposition \ref{P2.3} will follow from the next lemma.

\begin{Lem}\label{L5.5}
Let $H$ be a real parameter tending to infinity, and let $\tilde{Q}(H)$ be as in \eqref{5.19}. We have
\begin{align}\label{5.21}
\ab{\tilde{T}(H)} \ll \frac{H}{\log H}.
\end{align}
Moreover, there is a constant $A = A(q)$, depending on $q$ at most, such that for all sufficiently large $X$, there is some $H$ satisfying
\begin{align}\label{5.22}
\frac{X}{(\log X)^A} \le H \le X,
\end{align}
such that
\begin{align}\label{5.23}
\ab{\tilde{S}(H)} \gg_q H\frac{\phi(\tilde{Q}(H))}{\tilde{Q}(H)}.
\end{align}
The implied constant in \eqref{5.21} is absolute, and that in \eqref{5.23} depends on $q$ at most.
\end{Lem}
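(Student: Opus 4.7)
My plan is to treat part (i) by a direct sifting argument and part (ii) by an explicit decomposition $h = pm$, using Lemmas \ref{L5.1}--\ref{L5.4}. The argument splits on whether $a \equiv 1 \pmod{q}$ (Case~1) or $a \not\equiv 1 \pmod{q}$ (Case~2), mirroring the definition of $\mathscr{P}(H)$.

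For part (i), I would split $\tilde{T}(H)$ according to whether $h$ has a prime factor $p > H/(\log H)^{2}$. If so, there is exactly one such by size, and $h = pm$ with $m \le (\log H)^{2}$ composed of primes $\equiv 1 \pmod{q}$ exceeding $\log H$; Brun--Titchmarsh gives $O(H/(m \log H))$ choices of $p$, and $\sum_{m} 1/m = O(1)$ by Mertens, contributing $O(H/\log H)$. Otherwise all prime factors of $h$ are $\equiv 1 \pmod{q}$ and lie in $(\log H, t(H))$ (a case vacuous for $a \equiv 1$, since there a residue analysis forces a large prime factor), so $h$ is $t(H)$-smooth; with $u = \log H/\log t(H) \sim 2 \log\log H/\log\log\log H$ and thus $u \log u \sim 2\log\log H$, Lemma \ref{L5.3} bounds this contribution by $H (\log H)^{-2 + o(1)}$, well within $O(H/\log H)$. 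Summing yields \eqref{5.21}.

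For part (ii), I would verify in Case~2 that every $h \in \tilde{S}(H)$ decomposes uniquely as $h = pm$ with $p \equiv a \pmod{q}$, $p > H/t(H)$, and $m$ composed of primes $r \equiv 1 \pmod{q}$ in $(\log H, t(H))$ with $pm \le H$; any other factorisation would force a residue class other than $a$. Siegel--Walfisz on the inner sum over $p$ then gives $|\tilde{S}(H)| \gg (H/(\phi(q)\log H)) \sum_{m} 1/m$, while Lemma \ref{L5.1} evaluates $\phi(\tilde{Q}(H))/\tilde{Q}(H)$ asymptotically, so \eqref{5.23} reduces to a lower bound on $\sum_m 1/m$. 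Applying Lemma \ref{L5.4} to $\mathscr{P} = \{r \equiv 1 \pmod{q} : \log H < r < t(H)\}$ with $u = 1$ controls the tail $\sum_{m > t(H)} 1/m$; subtracting from the identity $\prod_{r \in \mathscr{P}}(1 - 1/r) \cdot \sum_{\text{all } m} 1/m = 1$ produces $\sum_{m \le t(H)} 1/m \gg \prod_{r \in \mathscr{P}}(1 - 1/r)^{-1} \asymp (\log t(H)/\log\log H)^{1/\phi(q)}$, exactly the order needed. Case~1 is analogous but simpler: $\tilde{S}(H)$ consists of integers $\le H$ composed of primes $\equiv 1 \pmod{q}$ exceeding $\log H$, which Lemma \ref{L5.2} counts directly.

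The main obstacle is uniformity of the lower bound in $H$: the $o(1)$ in Lemma \ref{L5.4} and the $\sim$-approximations in Lemma \ref{L5.1} depend on $H$ and may spoil \eqref{5.23} at individual values. The formulation with $H \in [X/(\log X)^{A}, X]$ accommodates this by letting me sum $|\tilde{S}(H)|$ over $H$ in the window and apply pigeonhole to locate a good $H$; the exponent $A = A(q)$ is chosen large enough so that the cumulative error terms from the smooth-number and Mertens-type estimates are dwarfed by the main term.
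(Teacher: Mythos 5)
Your part~(i) is in substance the paper's own argument (split on whether $h$ has a prime factor exceeding $H/(\log H)^2$, then bound the remaining $t(H)$-smooth contribution via Lemma~\ref{L5.3} with $u\log u\sim 2\log\log H$); just record the intermediate case, handled explicitly in the paper, that $h\in\tilde{T}(H)$ cannot have a prime factor $p\equiv a\bmod q$ with $H/t(H)<p\le H/(\log H)^2$, since the cofactor would then be composed of primes $\equiv 1\bmod q$ and $h$ would be $\equiv a\bmod q$. For part~(ii), your Case~2 is a genuinely different route. The paper never estimates $|\tilde{S}(H)|$ directly: it lower-bounds the auxiliary set $S'(X)$ (in which small primes $\equiv 1\bmod q$ are allowed to divide $h$) using the counting function of Lemma~\ref{L5.2}, writes $|S'(X)|\le\sum_d|\tilde{S}(X/d)|$ over $(\log X)$-smooth $d$ with prime factors $\equiv1\bmod q$, and derives a contradiction from the assumption that $|\tilde{S}(H)|$ is small throughout the window \eqref{5.22}, using Lemma~\ref{L5.4} with $u=A$ and Rankin's trick to control the ranges $d>(\log X)^A$. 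You instead use the exact bijection $h=pm$ and reduce to a lower bound for $\sum_m 1/m$ over $m$ composed of primes $\equiv1\bmod q$ in $(\log H,t(H))$, obtained as the full Euler product minus the tail, the tail being controlled by Lemma~\ref{L5.4} with $u=1$; since $e^{-\gamma}\int_1^{\infty}\rho(v)\,dv=1-e^{-\gamma}<1$, a positive proportion of $\prod(1-1/r)^{-1}$ survives. Done carefully (restrict to $m\le t(H)/2$ so that $(H/t(H),H/m]$ is long enough for the prime number theorem in progressions, and note that $m\in(t(H)/2,t(H)]$ contributes $O(1)$ to $\sum 1/m$, negligible because the Euler product tends to infinity), this yields \eqref{5.23} for \emph{every} sufficiently large $H$, so the window and pigeonhole become unnecessary. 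That is a real simplification and strengthening, and I see no error in it.

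The genuine gap is your Case~1. Lemma~\ref{L5.2} counts integers composed of primes $\equiv1\bmod q$ with \emph{no lower bound} on those primes; it does not count $\tilde{S}(H)$, whose elements must avoid every prime $p\le\log H$ with $p\equiv1\bmod q$. The discrepancy is a factor of order $(\log\log H)^{-1/\phi(q)}$, and no lemma available to you supplies the restricted count --- this missing count is precisely the obstruction that drives the paper's $S'$-versus-$\tilde{S}$ contradiction argument. Your own Case~2 device repairs it: write $h=pm$ with $p\equiv1\bmod q$, $\sqrt{H}<p\le H/m$, and $m$ composed of primes $\equiv1\bmod q$ in $(\log H,\sqrt{H}]$, and run the same reciprocal-sum bound. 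Separately, your closing paragraph about summing $|\tilde{S}(H)|$ over the window and pigeonholing does not describe a workable argument as stated --- the paper's use of the window is a full contradiction argument keyed to the decomposition $h=dm$, not an averaging of error terms --- but it is also not needed once the direct argument is carried out.
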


\begin{proof}[Proof of Proposition \ref{P2.3}]
Let $S(H)$ and $T(H)$ be as in \eqref{2.12}. If $p_0 \ne 1$ then by \eqref{2.1} there are at most $H/p_0 < H/\log H$ multiples of $p_0$ in $T(H)$, so
\[
\abs{T(H)} \ll \frac{H}{\log H}
\]
by \eqref{5.21}. We also have $\abs{S(H)} \ge \ab{\tilde{S}(H)}$. An application of Lemma \ref{L5.1} reveals that
\begin{align*}
\frac{\phi(\tilde{Q}(H))}{\tilde{Q}(H)} = \prod_{p \in \mathscr{P}(H)}\br{1 - \frac{1}{p}} \gg_q  
\begin{cases}
\frac{1}{\log H}\br{\frac{\log H}{\log\log H}}^{1/\phi(q)}     & \textrm{if $a \equiv 1 \bmod q$,} \\
\frac{1}{\log H}\br{\frac{\log t(H)}{\log\log H}}^{1/\phi(q)}  & \textrm{if $a \not\equiv 1 \bmod q$.}
\end{cases}
\end{align*}
Therefore, in either case, combining \eqref{5.21} and \eqref{5.23} gives
\[
\ab{S(H)} - \ab{T(H)} \gg \ab{\tilde{S}(H)} - \ab{\tilde{T}(H)} \gg_q H\frac{\phi(\tilde{Q}(H))}{\tilde{Q}(H)} \gg H\frac{\phi(Q(H))}{Q(H)}.
\]
Proposition \ref{P2.3} now follows from Lemma \ref{L5.5}.
\end{proof}

\begin{proof}[Proof of Lemma \ref{L5.5}]
We assume $a \not\equiv 1 \bmod q$ as the case $a \equiv 1 \bmod q$ is similar and simpler. 

There are $\ll H/\log H$ primes in $\tilde{T}(H)$, so let us count the composites $h \in \tilde{T}(H)$. If $h = pm$ for some prime $p > H/(\log H)^2$, with $m > 1$, then $m < (\log H)^2$ is composed only of primes $> \log H$ and $\equiv 1 \bmod q$, by the construction of $\mathscr{P}(H)$. Thus, $m$ must be prime itself, and $p \le H/\log H$. We partition $(H/(\log H)^2, H/\log H]$ into sub-intervals $I_l = (e^{l-1}H/(\log H)^2,e^lH/(\log H)^2]$, and $(\log H,$ $(\log H)^2]$ into sub-intervals $J_l = (\log H, (\log H)^2/e^l]$, $1 \le l \le \log\log H$, and using the prime number theorem, we deduce that the contribution from elements with a large prime factor is at most
\[
\sum_{1 \le l \le \log\log H} \sums{p \in I_l}{p \not\equiv 1 \bmod q} \sums{p' \in J_l}{p \equiv 1 \bmod q} 1 \ll
\sum_{1 \le l \le \log\log H} \frac{e^lH}{(\log H)^3} \frac{(\log H)^2}{e^l\log\log H} \ll \frac{H}{\log H}.
\]

If $h = pm$ with $p \equiv a \bmod q$, then $p > H/t(H)$, and $m < t(H)$ must be composed only of primes $\equiv 1 \bmod q$, a contradiction as $h \not\equiv a \bmod q$. The only elements left uncounted must be composed only of primes $p \equiv 1 \bmod q$ with $\log H < p < t(H)$. By \eqref{5.5}, the number of such elements is at most 
\[
\Psi(H,t(H)) = H\exp \br{-u\log u - u\log\log u + O(u)},
\]
where
\[
u = \frac{\log H}{\log t(H)} = \frac{2\log\log H}{\log\log\log H}.
\]
Thus 
\[
u\log u + u\log\log u + O(u) \sim u\log u \sim 2\log\log H,
\]
and so 
\[
\Psi(H,t(H)) \ll \frac{H}{\log H}.
\]
Combining these estimates yields \eqref{5.21}.

Now suppose $H$ is in the range \eqref{5.22}. To bound the size of $\tilde{S}(H)$ from below we will first do the same for 
\[
S'(X) := \{h \in (0,X] : \textrm{$(Q'(X),h) = 1$ and $h \equiv a \bmod q$}\},
\]
where
\[
Q'(X) := q\prod_{p \in \mathscr{P}'(X)} p, \quad \mathscr{P}'(X) := \mathscr{P}(X) \setminus \{p \le \log X : p \equiv 1 \bmod q\}.
\]

Now $pm \in S'(X)$ if $X/t(X) < p \equiv a \bmod q$ and $m \in \mathscr{S}(X/p)$. We partition $(X/t(X),X]$ into sub-intervals $I_l = (e^{l-1}X/t(X),e^lX/t(X)]$, $1 \le l \le \log t(X)$, and deduce, using the prime number theorem for arithmetic progressions and Lemma \ref{L5.2}, that 
\begin{align}\label{5.24}
\begin{split}
\abs{S'(X)} & \ge \sum_{1 \le l \le \log t(X)} \sums{p \in I_l}{p \equiv a \bmod q} \sum_{m \in \mathscr{S}(t(X)/e^l)} 1 \\ & \gg_q
\sum_{1 \le l \le \frac{1}{2}\log t(X)} \frac{e^lX}{t(X)\log X}\cdot \frac{t(X)}{e^l\log t(X)}(\log t(X))^{1/\phi(q)} \\ & \gg
\frac{X}{\log X}(\log t(X))^{1/\phi(q)}.
\end{split}
\end{align}
Now, we may write any $h \in S'(X)$ uniquely as $h = dm$, where $d$ is composed only of primes $p \le \log X$ with $p \equiv 1 \bmod q$, and $m \in \tilde{S}(X)$. Thus, by \eqref{5.24}, there is a constant $c_1(q) > 0$, depending on $q$ at most, such that for all sufficiently large $X$,
\begin{align}\label{5.25}
c_1(q)\frac{X}{\log X}(\log t(X))^{1/\phi(q)} & \le \abs{S'(X)} = \sumss{d \le X}{p \mid d \Rightarrow p \le \log X}{p \equiv 1 \bmod q} \sums{m \le X/d}{m \in \tilde{S}(X)} 1 \le \sumss{d \le X}{p \mid d \Rightarrow p \le \log X}{p \equiv 1 \bmod q} \ab{\tilde{S}(X/d)}.
\end{align}
The inequality on the right is not immediate: in fact if $Z \le X$, then $\tilde{S}(X) \cap (0,Z] \subseteq \tilde{S}(Z)$. To see this, first note that as all of the functions used to define $\mathscr{P}(X)$ are monotonically increasing with $X$,
\[
\mathscr{P}(Z) \subseteq \mathscr{P}(X) \cup \{t(Z) \le p \le t(X) : p \equiv 1 \bmod q\}.
\]
Suppose $m \in \tilde{S}(X) \cap (0,Z]$, but $m \not\in \tilde{S}(Z)$. Then $p \in \mathscr{P}(Z)$ for some $p \mid m$, but $p \not\in \mathscr{P}(X)$, so $t(Z) \le p \le t(X)$ and $p \equiv 1 \bmod q$. Since $m \equiv a \not\equiv 1 \bmod q$, there must be some $p' \mid m$ with $p' \not\equiv 1 \bmod q$ and $p' \le m/p \le Z/t(Z) \le X/t(X)$. Then $p' \in \mathscr{P}(X)$, a contradiction.

Suppose for a contradiction that for some constant $c_2(q) > 0$, depending on $q$ at most, we have
\begin{align}\label{5.26}
\ab{\tilde{S}(H)} \le \frac{c_1(q)}{3c_2(q)}\frac{H}{\log X}\br{\frac{\log t(X)}{\log\log X}}^{1/\phi(q)}
\end{align}
for all $H$ in the range \eqref{5.22}. Then 
\begin{align}\label{5.27}
\begin{split}
\sumss{d \le (\log X)^A}{p \mid d \Rightarrow p \le \log X}{p \equiv 1 \bmod q} \ab{\tilde{S}(X/d)} & \le 
\frac{c_1(q)}{3c_2(q)}\frac{X}{\log X}\br{\frac{\log t(X)}{\log\log X}}^{1/\phi(q)}
\sumss{d \le (\log X)^A}{p \mid d \Rightarrow p \le \log X}{p \equiv 1 \bmod q} \frac{1}{d} \\ & \le 
\frac{c_1(q)}{3c_2(q)}\frac{X}{\log X}\br{\frac{\log t(X)}{\log\log X}}^{1/\phi(q)}
\prods{p \le \log X}{p \equiv 1 \bmod q}\br{1 - \frac{1}{p}}^{-1} \\ & \le
\frac{c_1(q)}{3}\frac{X}{\log X}\br{\log t(X)}^{1/\phi(q)},
\end{split}
\end{align}
provided $X$ is sufficiently large, and for a suitable choice of $c_2(q)$ (given by Lemma \ref{L5.1}).

Now, by the fundamental lemma of Brun's sieve, we have
\begin{align}\label{5.28}
\ab{\tilde{S}(X/d)} \ll \frac{X}{d} \prod_{p \in \mathscr{P}(X/d)}\br{1 - \frac{1}{p}}
\end{align}
for any $d$. If $(\log X)^A < d \le \sqrt{X}$, then $\log (X/d) \asymp \log X$, and applying Lemma \ref{L5.1} to the sieve upper bound \eqref{5.28}, we see that 
\begin{align}\label{5.29}
\begin{split}
\sumss{(\log X)^A < d \le \sqrt{X}}{p \mid d \Rightarrow p \le \log X}{p \equiv 1 \bmod q} \ab{\tilde{S}(X/d)} \le
c_3(q) \frac{X}{\log X}\br{\frac{\log t(X)}{\log\log X}}^{1/\phi(q)} 
\sumss{(\log X)^A < d \le \sqrt{X}}{p \mid d \Rightarrow p \le \log X}{p \equiv 1 \bmod q} \frac{1}{d}
\end{split}
\end{align}
for some constant $c_3(q) > 0$. By lemmas \ref{L5.4} and \ref{L5.1} respectively, we have
\begin{align}\label{5.30}
\begin{split}
\sumss{(\log X)^A < d \le \sqrt{X}}{p \mid d \Rightarrow p \le \log X}{p \equiv 1 \bmod q} \frac{1}{d} & \le 
\prods{p \le \log X}{p \equiv 1 \bmod q}\br{1 - \frac{1}{p}}^{-1}(1+o(1))e^{-\gamma} \int_A^{\infty} \rho(v) \, dv  \\ & \le
c_4(q)(\log\log X)^{1/\phi(q)}\int_A^{\infty} \rho(v) \, dv
\end{split}
\end{align}
for some constant $c_4(q) > 0$. Now by \eqref{5.4}, 
\[
\int_A^{\infty} \rho(v) \, dv \to 0 \quad \textrm{as} \quad A \to \infty,
\]
so we may choose $A = A(c_1(q),c_3(q),c_4(q)) = A(q)$ so that 
\[
\int_{A}^{\infty} \rho(v) \, dv \le \frac{c_1(q)}{4c_3(q)c_4(q)}.
\]
For any such $A$, combining \eqref{5.29} and \eqref{5.30} yields
\begin{align}\label{5.31}
\sumss{(\log X)^A < d \le \sqrt{X}}{p \mid d \Rightarrow p \le \log X}{p \equiv 1 \bmod q} \ab{\tilde{S}(X/d)} \le 
\frac{c_1(q)}{4}\frac{X}{\log X}(\log t(X))^{1/\phi(q)}.
\end{align}

Finally, using Rankin's trick, we see that
\begin{align}\label{5.32}
\begin{split}
\sumss{\sqrt{X} < d \le X}{p \mid d \Rightarrow p \le \log X}{p \equiv 1 \bmod q} \ab{\tilde{S}(X/d)} & \le 
\sums{\sqrt{X} < d \le X}{p \mid d \Rightarrow p \le \log X} \frac{X}{d}\br{\frac{d}{\sqrt{X}}}^{1/3}  \le 
X^{5/6} \prod_{p \le \log X} \br{1 - \frac{1}{p^{2/3}}}^{-1} \\ & \le
X^{5/6} \exp\br{\sum_{p \le \log X} \frac{3}{p^{2/3}}} \le
X^{5/6}\exp\br{9(\log X)^{1/3}} \\ & =
X^{5/6+o(1)}
\end{split}
\end{align}
by the prime number theorem.

Combining \eqref{5.25}, \eqref{5.27}, \eqref{5.31}, and \eqref{5.32}, we obtain $c_1(q) \le 2c_1(q)/3$, which is absurd. We conclude that for all sufficiently large $X$, there is some $H$ in the range \eqref{5.22} for which
\[
\ab{\tilde{S}(H)} \gg_q \frac{H}{\log X}\br{\frac{\log t(X)}{\log\log X}}^{1/\phi(q)} \gg
\frac{H}{\log H}\br{\frac{\log t(H)}{\log\log H}}^{1/\phi(q)}.
\]
A final application of Lemma \ref{L5.1} shows that this is $\gg_q H\phi(\tilde{Q}(H))/\tilde{Q}(H)$.
\end{proof}

\section{A lower bound}\label{Section 6}

In this section we will show how to obtain a quantitative version of Theorem \ref{T1.1}. We will use the assumptions and notation of sections \ref{Section 3} -- \ref{Section 5}, and show that 
\begin{align}\label{6.1}
\abs{\{p_{r+1} \le Y : \textrm{$p_{r+1} \equiv p_r \equiv a \bmod q$ and $p_{r+1} - p_r < \epsilon\log p_r$}\}} \ge Y^{1/3(\log\log Y)^A}
\end{align}
for all sufficiently large $Y$. Here $A = A(q)$ is the constant given in Lemma \ref{L5.5}. This lower bound could be improved by a sharpening of the range \eqref{5.22} for $H$. 

We will first prove that the estimate
\begin{align}\label{6.2}
\sum_{N < n \le 2N} \Lambda(n;\h,k+\ell)^4 \ll N(\log N)^{19k+4\ell}
\end{align}
holds, with an absolute implied constant. For by \eqref{4.1} and \eqref{4.2}, 
\begin{align}\label{6.3}
\begin{split}
\sum_{N < n \le 2N} \Lambda(n;\h,k+\ell)^4 & =
\Sum_{d_1,\ldots,d_4} \lambda_{d_1}\cdots \lambda_{d_4} \sums{N < n \le 2N}{[d_1,\ldots,d_4] \mid P(n;\h)} 1 \\ & = 
\Sum_{d_1,\ldots,d_4} \lambda_{d_1}\cdots \lambda_{d_4} \sums{m \bmod [d_1,\ldots,d_4]}{\in \od{[d_1,\ldots,d_4]}} 
\sums{N < n \le 2N}{n \equiv m \bmod [d_1,\ldots,d_4]} 1 \\ & \le 
\sums{d_1,\ldots,d_4}{\textrm{squarefree}} \abs{\lambda_{d_1}\cdots \lambda_{d_4}} \sums{m \bmod [d_1,\ldots,d_4]}{\in \od{[d_1,\ldots,d_4]}} 
\br{\frac{N}{[d_1,\ldots,d_4]} + O(1)} \\ & \ll
N(\log R)^{4(k+\ell)} \sums{d_1,\ldots,d_4 \le R}{\textrm{squarefree}}\frac{\vd{[d_1,\ldots,d_4]}}{[d_1,\ldots,d_4]}.
\end{split}
\end{align}
To see the last inequality, note that $[d_1,...,d_4] \le R^4 = N^{1 - 4\epsilon'} = o(N)$, and so $N/[d_1,...,d_4]+O(1) \ll N/[d_1,...,d_4]$.

As observed in Section \ref{Section 4}, $\vd{d} \le k^{\omega(d)}$ for squarefree $d$, so
\begin{align}\label{6.4}
\begin{split}
\sums{d_1,\ldots,d_4 \le R}{\textrm{squarefree}} \frac{\vd{[d_1,\ldots,d_4]}}{[d_1,\ldots,d_4]} & \le
\sum_{D \le R^4}\frac{\mu^2(D)k^{\omega(D)}}{D} \sums{d_1,\ldots,d_4}{[d_1,\ldots,d_4] = D} 1 \\ & =
\sum_{D \le R^4} \frac{\mu^2(D)(15k)^{\omega(D)}}{D} \le 
\prod_{p \le R^4}\br{1 + \frac{15k}{p}} \\ & \ll (\log R^4)^{15k}.
\end{split}
\end{align}
Since $R^4 < N$, combining \eqref{6.3} and \eqref{6.4} yields \eqref{6.2}.

Now choose $N$ so that \eqref{3.3} holds. If we restrict the outer sum in the definition of $\mathscr{L}$ to those $n$ for which $(Qn,Qn+H]$ contains a prime string $p_{r+1} \equiv p_r \equiv a \bmod q$, we remove no positive terms. Thus, if $\sum^{*}$ denotes this restricted sum, then
\begin{align}\label{6.5}
\begin{split}
& \mathscr{L} \le \\ & \frac{1}{N}\br{\frac{\phi(Q)}{Q}}^k \SumS_{N < n \le 2N} 
\br{\sum_{h \in S} \vartheta(Qn+h) - \sum_{h \in T} \vartheta(Qn+h) - \log 3QN}\Lambda_{R}(n;\h,k+\ell)^2.
\end{split}
\end{align}
For each $n \in (N,2N]$, 
\begin{align}\label{6.6}
\sum_{h \in S} \vartheta(Qn+h) - \sum_{h \in T} \vartheta(Qn+h) - \log 3QN \le H\log 3QN,
\end{align}
and by the Cauchy-Schwartz inequality, 
\begin{align}\label{6.7}
\SumS_{N < n \le 2N} \Lambda_{R}(n;\h,k+\ell)^2 \le 
\br{\SumS_{N < n \le 2N} 1 }^{1/2}\br{\sum_{N < n \le 2N}\Lambda_{R}(n;\h,k+\ell)^4}^{1/2}.
\end{align}
Combining \eqref{6.5} -- \eqref{6.7} yields
\[
\SumS_{N < n \le 2N} 1 \ge N^2(Q/\phi(Q))^{2k}\mathscr{L}^2(H\log 3QN)^{-2}\br{\sum_{N < n \le 2N}\Lambda_{R}(n;\h,k+\ell)^4}^{-1}.
\]
Using $H = \epsilon\log N$, $\log 3QN = (1+o(1))\log N$, and $Q/\phi(Q) \ge 1$, then applying \eqref{3.3} and \eqref{6.2}, we see that the right-hand side is $\gg_{k,q} N/(\log N)^{17k+2}$. Since $k$ depends on $\epsilon$, we may write
\begin{align}\label{6.8}
\SumS_{N < n \le 2N} 1 \gg_{\epsilon,q} \frac{N}{(\log N)^{B(\epsilon)}},
\end{align}
where $B(\epsilon)$ is a constant depending on $\epsilon$. 

Now fix a large number $Y$, and let
\[
X := \epsilon\br{1 + \frac{2c\epsilon}{(\log\log Y)^2}}^{-1}\log Y,
\]
with $c > 0$ fixed. By Lemma \ref{L5.5}, we may choose $H$ in the range
\[
X/(\log X)^A \le H \le X
\]
so that \eqref{3.3}, hence \eqref{6.1}, holds with $N = \exp(H/\epsilon)$. By \eqref{2.4}, 
\[
3Q(H)N \le \exp\br{\frac{H}{\epsilon} + \frac{cH}{(\log H)^2}} \le Y,
\]
because
\begin{align*}
\frac{H}{\epsilon} + \frac{cH}{(\log H)^2} & = \frac{H}{\epsilon}\br{1 + \frac{c\epsilon}{(\log H)^2}} \le
\frac{X}{\epsilon}\br{1+ \frac{2c\epsilon}{(\log\log Y)^2}} = \log Y. 
\end{align*}
Here we have used $\log H = (1+o(1))\log X = (1+o(1))\log\log Y$. Also,
\[
\log N = H/\epsilon \ge X/\epsilon(\log X)^A \ge \log Y/2(\log\log Y)^A.
\]
Therefore, using \eqref{6.8} as a lower bound for the number of prime strings up to $Y$, we deduce \eqref{6.1}. (At best, we may have $H = X$, in which case we could deduce a lower bound of $Y^{1 - c'/(\log\log Y)^2}$, for some constant $c' > 0$.)

\section{Concluding remarks}\label{Section 7}

Proposition \ref{P2.2} is similar to a special case of Propositions 1 and 2 of \cite{GPY2006}, which are used to prove that
\[
\liminf_{r \to \infty} \frac{p'_{r+\nu} - p'_{r}}{\phi(q)\log p'_r} \le e^{-\gamma}(\sqrt{\nu} - 1)^2,
\]
where $p'_j$ denotes the $j$th smallest prime in the arithmetic progression $a \bmod q$, $(q,a) = 1$. By considering $H_{\nu} = (\nu - 1 + \epsilon)\log N$ instead of $H$, $Q = Q(H_{\nu})$ instead of $Q(H)$, and
\begin{align*}
& \mathscr{L}_{\nu} := \\ & \frac{1}{N}\br{\frac{\phi(Q)}{Q}}^k \sum_{N < n \le 2N} 
\br{\sum_{h \in S} \vartheta(Qn+h) - \nu\sum_{h \in T} \vartheta(Qn+h) - \nu\log 3QN}\Lambda_{R}(n;\h,k+\ell)^2
\end{align*}
instead of $\mathscr{L}$, it is possible to prove that the interval $(Qn,Qn + H_{\nu}]$ contains a string of $\nu + 1$ consecutive primes $\equiv a \bmod q$, for some $n \in (N,2N]$ and a sequence $N \to \infty$. It may be feasible to prove a similar result with $H_{\nu} = (e^{-\gamma}(\sqrt{\nu} - 1)^2 + \epsilon)\log N$.

\section{Acknowledgements}\label{Section 8}

I would like to thank Andrew Granville, without whose help and encouragement this work would not have materialized. For many productive discussions, my thanks also to Jorge Jim\'enez Urroz, and my colleagues Farzad Aryan, Mohammad Bardestani, Daniel Fiorilli and Kevin Henriot.

\bibliographystyle{article}

\singlespace

{\small D\'epartement de math\'ematiques et de statistique \\
Universit\'e de Montr\'eal \\
CP 6128, succ. Centre-ville \\
Montr\'eal, Qu\'ebec H3C 3J7 \\
Canada}

{\small E-mail address: \href{mailto:freiberg@dms.umontreal.ca}{freiberg@dms.umontreal.ca}}

\end{document}